\theoremstyle{plain}
\newtheorem{teo}{Theorem}[section]
\newtheorem{lemma}[teo]{Lemma}
\newtheorem{prop}[teo]{Proposition}
\newtheorem{ackn}{Acknowledgments\!}
\theoremstyle{definition}
\theoremstyle{remark}
\newtheorem{rem}[teo]{Remark}
\newtheorem{remark}[teo]{Remark}
\numberwithin{equation}{section}
\def\R{{{\mathbb R}}}
\def\SS{{{\mathbb S}}}
\newcommand{\pa}{\partial}
\newcommand{\mR}{\mathcal{R}}
\newcommand{\mE}{\mathcal{E}}
\newcommand{\mF}{\mathcal{F}}
\newcommand{\mG}{\mathcal{G}}
\newcommand{\mI}{\mathcal{I}}
\renewcommand{\d}{\delta}
\renewcommand{\a}{\alpha}
\renewcommand{\b}{\beta}
\newcommand{\s}{\sigma}
\renewcommand{\t}{\tau}
\newcommand{\g}{\gamma}
\newcommand{\N}{\mathbb{N}}
\renewcommand{\th}{\phi}
\begin{document}

\author[Pietro Baldi]{Pietro Baldi}
\address[Pietro Baldi]{Dipartimento di Matematica, Universit\`a di
  Napoli Federico II, Via Cintia, Monte S.~Angelo, Napoli, Italy,
  I--80126}
\email[P. Baldi]{pietro.baldi@unina.it}

\author[Emanuele Haus]{Emanuele Haus}
\address[Emanuele Haus]{Dipartimento di Matematica, Universit\`a di
  Napoli Federico II, Via Cintia, Monte S.~Angelo, Napoli, Italy,
  I--80126}
\email[E. Haus]{emanuele.haus@unina.it}

\author[Carlo Mantegazza]{Carlo Mantegazza}
\address[Carlo Mantegazza]{Dipartimento di Matematica, Universit\`a di
  Napoli Federico II, Via Cintia, Monte S.~Angelo, Napoli, Italy,
  I--80126}
\email[C. Mantegazza]{c.mantegazza@sns.it}

\title[Non--Existence of {\em Theta}--Shrinkers]{Non--Existence of
  {\em Theta}--Shaped Self--Similarly Shrinking Networks Moving by Curvature}
\date{\today}

\maketitle
\setcounter{tocdepth}{1}
\tableofcontents

\begin{abstract} We prove that there are no networks homeomorphic to
  the Greek ``theta'' letter (a double cell) embedded in the plane
  with two triple junctions with angles of $120$ degrees, such that
  under the motion by curvature they are self--similarly shrinking.\\
This fact completes the classification of the self--similarly
shrinking networks in the plane with at most two triple
junctions, see~\cite{chenguo,haettenschweiler,schnurerlens}.
\end{abstract}

\section{Introduction}

Recently, the problem of the evolution by curvature of a network of
curves in the plane got the interest of several
authors~\cite{mannovtor,BeNo,schn-schu,schnurerlens,haettenschweiler,mazsae,Ilnevsch,MMN13,pluda}.
It is well known that, after the work of Huisken~\cite{huisk3} in the
smooth case of the hypersurfaces in the Euclidean space and of
Ilmanen~\cite{ilman1,ilman3} in the more general weak settings of
varifolds, that a suitable sequence of rescalings of the {\em subsets}
of $\R^n$ which are evolving by mean curvature, approaching a singular
time of the flow, converges to a so called ``blow--up limit'' set
which, letting it flow again by mean curvature, simply moves by
homothety, precisely, it shrinks down self--similarly toward the origin of
the Euclidean space.

This procedure and the classification of these special sets (possibly
under some hypotheses), called {\em shrinkers}, is a key point in
understanding the asymptotic behavior of the flow at a singular time.

In the special situation of a network of curves in the plane moving by curvature (see~\cite{mannovtor,mannovplusch}, for instance), it is easy to see that every $C^2$ curve $\gamma:I\to\R^2$ of such a limit shrinking network must satisfy the following ``structural'' equation (which is actually an ODE for $\gamma$)
\begin{equation} \label{1102.1}
\overline{k}+\gamma^\perp=0,
\end{equation}
where $\overline{k}$ is the vector curvature of the curve at the point
$\gamma$ and $\gamma^\perp$ denotes the normal component of the
position vector $\gamma$. Introducing an arclength parameter $s$ on
the curve $\gamma$, we have a unit tangent vector field
$\tau=\frac{d\,}{ds}\gamma$, a unit normal vector field $\nu$ which is
the counterclockwise rotation of $\pi/2$ in $\R^2$ of the vector
$\tau$ and the curvature vector given by
$\overline{k}=k\nu=\frac{d^2\,}{ds^2}\gamma$, where $k$ is then simply
the curvature of $\gamma$. With these notations, the above equation
can be rewritten as 
\begin{equation} \label{1102.1-bis}
k+\langle\gamma\,\vert\,\nu\rangle=0.
\end{equation}
It is known, by the work of Abresch--Langer~\cite{ablang1} and independently of
Epstein--Weinstein~\cite{epswei}, that the only complete, embedded
shrinking curves in $\R^2$ are the lines through the origin and the
unit circle.

Dealing with the motion by curvature of networks of curves in the
plane, it is natural (for ``energy reasons'') to consider networks with
only triple junctions and such that the three concurring curves (which
are $C^\infty$) form three angles of $120$ degrees between each other
(``Herring'' condition); such networks are called {\em
  regular}. Analogously, a shrinker is called regular if it satisfies
such geometric condition.

If one consider networks with only one triple junction, the only
complete, embedded, connected, regular shrinkers are given (up to a
rotation) by the ``standard triod'' and the ``Brakke spoon'' (first
described in~\cite{brakke}) as in the following figures.

\begin{figure}[h]
\begin{center}
\begin{tikzpicture}[scale=0.6]
\draw[shift={(-10,0)}]
(-2,-2) to [out=45, in=-135, looseness=1] (2,2);
\draw[dashed,shift={(-10,0)}]
(-2.5,-2.5) to [out=45, in=-135, looseness=1] (-2,-2)
(2,2)to [out=45, in=-135, looseness=1](2.6,2.6);
\draw[shift={(+10,0)}]
(0,0) to [out=90, in=-90, looseness=1] (0,2)
(0,0) to [out=210, in=30, looseness=1] (-1.73,-1)
(0,0) to [out=-30, in=150, looseness=1](1.73,-1);
\draw[dashed,shift={(+10,0)}]
(0,2) to [out=90, in=-90, looseness=1] (0,3)
(-1.73,-1) to [out=210, in=30, looseness=1] (-2.59,-1.5)
(1.73,-1) to [out=-30, in=150, looseness=1](2.59,-1.5);
\draw[color=black,scale=1,domain=-4.141: 4.141,
smooth,variable=\t,shift={(0,0)},rotate=0]plot({2.*sin(\t r)},
{2.*cos(\t r)}) ;
\fill(0,0) circle (2pt);
\fill(-10,0) circle (2pt);
\fill(10,0) circle (2pt);
\path[font=\small]
(-10,.2) node[left]{$O$}
(10,.2) node[left]{$O$}
(.1,.2) node[left]{$O$};
\end{tikzpicture}
\end{center}
\begin{caption}{Easy examples of regular shrinkers: a line from the
    origin, the unit circle $\SS^1$ and an unbounded triod composed of
    three halflines from the origin meeting at $120$ degrees, called
    standard triod.}
\end{caption}
\end{figure}

\begin{figure}[h]
\begin{center}
\begin{tikzpicture}[scale=0.4]
\draw[color=black]
(-4.8,0)to[out= 0,in=180, looseness=1](-1.535,0)
(-1.535,0)to[out= 60,in=180, looseness=1] (3.7,3)
(3.7,3)to[out= 0,in=90, looseness=1] (6.93,0)
(-1.535,0)to[out= -60,in=180, looseness=1] (3.7,-3)
(3.7,-3)to[out= 0,in=-90, looseness=1] (6.93,0);
\draw[color=black,dashed](-9,0)to[out= 0,in=180, looseness=1](-5.0,0);
\fill(1,0) circle (3pt);
\path[font=\large]
 (.3,-.35) node[above]{$O$};
\end{tikzpicture}
\end{center}
\begin{caption}{A less easy example of a regular shrinker: a Brakke spoon.}
\end{caption}
\end{figure}

About shrinkers with two triple junctions, it is not difficult to show
that there are only two possible topological shapes for a complete
embedded, regular shrinker: one is the ``lens/fish'' shape and the
other is the shape of the Greek ``theta'' letter (or ``double cell''),
as in the next figure.

\begin{figure}[h]
\begin{center}
\begin{tikzpicture}[scale=1]
\draw[shift={(0,0)}]  
(-3.73,0)
to[out= 50,in=180, looseness=1] (-2.8,0) 
to[out= 60,in=150, looseness=1.5] (-1.5,1) 
(-2.8,0)
to[out=-60,in=180, looseness=0.9] (-1.25,-0.75)
(-1.5,1)
to[out= -30,in=90, looseness=0.9] (-1,0)
to[out= -90,in=60, looseness=0.9] (-1.25,-0.75)
to[out= -60,in=180, looseness=0.9](-0.3,-1.3);
\draw[dashed,shift={(0,0)}]  
(-3.73,0)to[out= 50,in=180, looseness=1] (-4,-.6);
\draw[dashed,shift={(0,0)}]  
(-0.3,-1.3)to[out= 0,in=150, looseness=.8] (0.53,-.8);
\path[font=\small,shift={(0,0)}]
(-2.4,0.3) node[below] {$O_1$}
(-1.25,-0.75)node[right]{$O_2$}
(-1.5,-1)[left] node{$\gamma^2$}
(-3.2,.4) node[left] {$\gamma^1$}
(-0.5,0.65)[left] node{$\gamma^4$}
(-0.35,-1.30)[below] node{$\gamma^3$};
\draw[shift={(5,0)}] 
(-1.73,-1.8) 
to[out= 180,in=180, looseness=1] (-2.8,0) 
to[out= 60,in=150, looseness=1.5] (-1.5,1) 
(-2.8,0)
to[out=-60,in=180, looseness=0.9] (-1.25,-0.75)
(-1.5,1)
to[out= -30,in=90, looseness=0.9] (-1,0)
to[out= -90,in=60, looseness=0.9] (-1.25,-0.75)
to[out= -60,in=0, looseness=0.9](-1.73,-1.8);
\path[font=\small, shift={(5,0)}] 
(-1.25,-0.85)node[right]{$O_2$}
 (-1.5,-0.3)[left] node{$\gamma^2$}
 (-0.6,.9)[left] node{$\gamma^1$}
 (-0.6,-1.45)[left] node{$\gamma^3$}
 (-3,0.65) node[below] {$O_1$};   
\end{tikzpicture}
\end{center}
\begin{caption}{A lens/fish--shaped and a $\Theta$--shaped network.}
\end{caption}\end{figure}

It is well known that there exist unique (up to a rotation)
lens--shaped or fish--shaped, embedded, regular shrinkers which are
symmetric with respect to a line through the origin of $\R^2$
(see~\cite{chenguo,schnurerlens}). It was instead 
unknown whether regular $\Theta$--shaped shrinkers (or simply $\Theta$--shrinkers) exist, with numerical evidence
in favor of the conjecture of non--existence
(see~\cite{haettenschweiler}). In this paper we are going to show that
this is actually the case.

\begin{figure}[h]
\begin{center}
\begin{tikzpicture}[scale=0.3]
\draw[color=black]
(-3.035,0)to[out= 60,in=180, looseness=1] (2.2,2.8)
(2.2,2.8)to[out= 0,in=120, looseness=1] (7.435,0)
(2.2,-2.7)to[out= 0,in=-120, looseness=1] (7.435,0)
(-3.035,0)to[out= -60,in=180, looseness=1] (2.2,-2.7);
\draw[color=black]
(-7,0)to[out= 0,in=180, looseness=1](-3.035,0)
(7.435,0)to[out= 0,in=180, looseness=1](11.4,0);
\draw[color=black,dashed]
(-9,0)to[out= 0,in=180, looseness=1](-7,0)
(11.4,0)to[out= 0,in=180, looseness=1](13.4,0);
\fill(2.2,0) circle (3pt);
\path[font=\small]
(1.5,-.35) node[above]{$O$};
\draw[color=black,scale=4,shift={(7,0)}]
(-0.47,0)to[out= 20,in=180, looseness=1](1.5,0.65)
(1.5,0.65)to[out= 0,in=90, looseness=1] (2.37,0)
(-0.47,0)to[out= -20,in=180, looseness=1](1.5,-0.65)
(1.5,-0.65)to[out= 0,in=-90, looseness=1] (2.37,0);
\draw[white, very thick,scale=4,shift={(7,0)}]
(-0.47,0)--(-.150,-0.13)
(-0.47,0)--(-.150,0.13);
\draw[color=black,scale=4,shift={(7,0)}]
(-.150,0.13)to[out= -101,in=90, looseness=1](-.18,0)
(-.18,0)to[out= -90,in=101, looseness=1](-.150,-0.13);
\draw[black,scale=4,shift={(7,0)}]
(-.150,0.13)--(-1.13,0.98);
\draw[black,scale=4,shift={(7,0)}]
(-.150,-0.13)--(-1.13,-0.98);
\draw[black, dashed,scale=4,shift={(7,0)}]
(-1.13,0.98)--(-1.50,1.31);
\draw[black, dashed,scale=4,shift={(7,0)}]
(-1.13,-0.98)--(-1.50,-1.31);
\fill(28.05,0) circle (3pt);
\path[font=\small]
(29,-.6) node[above]{$O$};
\end{tikzpicture}
\end{center}
\begin{caption}{A lens--shaped and a fish--shaped shrinker.}
\end{caption}
\end{figure}

\begin{figure}[h]
\begin{center}
\begin{tikzpicture}[scale=0.5, rotate=90]
\draw[color=black]
(-3.035,0)to[out= 60,in=180, looseness=1] (2.2,3)
(2.2,3)to[out= 0,in=90, looseness=1] (5.43,0)
(-3.035,0)to[out= -60,in=180, looseness=1] (2.2,-3)
(2.2,-3)to[out= 0,in=-90, looseness=1] (5.43,0);
\draw[color=black, rotate=180]
(-3.035,0)to[out= 60,in=180, looseness=1] (2.2,3)
(2.2,3)to[out= 0,in=90, looseness=1] (5.43,0)
(-3.035,0)to[out= -60,in=180, looseness=1] (2.2,-3)
(2.2,-3)to[out= 0,in=-90, looseness=1] (5.43,0);
\draw[color=white, very thick]
(-3.035,0)to[out= -60,in=157.5, looseness=1] (-0.05,-2.65)
(-3.035,0)to[out= 60,in=-157.5, looseness=1] (-0.05,2.65);
\draw[color=white, very thick]
(3.035,0)to[out=-120,in=22.5, looseness=1] (0.05,-2.65)
(3.035,0)to[out= 120,in=-22.5, looseness=1] (0.05,2.65);
\draw[color=black]
(0,2.4)to[out= 90,in=-90, looseness=1](0,-2.4);
\fill(0,0) circle (3pt);
\path[font=\small]
(0,.4) node[above]{$O$};
\end{tikzpicture}
\end{center}
\begin{caption}{A hypothetical $\Theta$--shrinker.}
\end{caption}
\end{figure}

\begin{teo} \label{thm:main}
There are no regular $\Theta$--shrinkers.
\end{teo}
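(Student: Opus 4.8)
The plan is to extract, from the shrinker equation \eqref{1102.1-bis} together with the presence of the two triple junctions, enough rigidity to reduce a hypothetical $\Theta$--shrinker to a very restricted family, and then to contradict that family via Gauss--Bonnet and embeddedness. \textbf{Step 1 (each arc).} Write the network as three $C^\infty$ arcs $\gamma^1,\gamma^2,\gamma^3$ joining the junctions $O_1,O_2$, parametrized by arclength. Differentiating \eqref{1102.1-bis} and using the Frenet relations $\tau_s=k\nu$, $\nu_s=-k\tau$ gives $k_s=k\,\langle\gamma\mid\tau\rangle$ and $\langle\gamma\mid\nu\rangle_s=-k\,\langle\gamma\mid\tau\rangle$; in particular $k$ solves a linear homogeneous ODE along each arc, hence it either never vanishes or is identically zero, and in the latter case $\langle\gamma\mid\nu\rangle\equiv0$, so the arc is a segment of a line through the origin. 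Since $|\gamma|^2=\langle\gamma\mid\tau\rangle^2+\langle\gamma\mid\nu\rangle^2=\langle\gamma\mid\tau\rangle^2+k^2$, a direct computation shows $\frac{d}{ds}\big(k^2e^{-|\gamma|^2}\big)=0$, so $k^2e^{-|\gamma|^2}$ is constant along each arc. Hence each $\gamma^i$ is either (i) a segment of a line through $O$, or (ii) an arc of an Abresch--Langer curve on which $\operatorname{sgn}(k^i)$ is constant and $|\gamma|$ is confined between two values $0<r_{\min}<1<r_{\max}$ fixed by the constant $k^2e^{-|\gamma|^2}$ (the borderline value giving an arc of $\SS^1$).

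\textbf{Step 2 (junctions and symmetry).} Orient all three arcs from $O_1$ to $O_2$. At a $120^\circ$ junction the three unit tangents sum to zero, hence so do the three unit normals; pairing with the position vector and using \eqref{1102.1-bis} gives $k^1+k^2+k^3=0$ at $O_1$ and at $O_2$. By the conservation law of Step 1, $\big(k^i(O_2)\big)^2=\big(k^i(O_1)\big)^2e^{|O_2|^2-|O_1|^2}$ for each $i$, and since $\operatorname{sgn}(k^i)$ is constant the two curvature triples are proportional. Now at most one arc can be of type (i): two such arcs would both lie on the line $O_1O_2$ and therefore coincide, and three is a fortiori impossible. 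If exactly one, say $\gamma^2$, is of type (i), then $O\in\ell:=$ line $O_1O_2$, $\gamma^2\subset\ell$, the arcs $\gamma^1,\gamma^3$ leave $O_1$ in the two directions making $\pm120^\circ$ with $\ell$, and since reflection $R_\ell$ across $\ell$ fixes $O$ it maps shrinkers to shrinkers; it therefore sends the shrinker arc issuing from $O_1$ in one of those directions onto the one issuing in the other, so $\gamma^3=R_\ell(\gamma^1)$ and the network is symmetric across $\ell$. If no arc is of type (i) one reaches the same conclusion through the Alexandrov moving--plane method applied to lines through $O$ (rotating such a line until its reflection first touches the network and invoking ODE uniqueness at the contact point, with the appropriate care at the junctions). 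In all cases a $\Theta$--shrinker is symmetric across a line $\ell\ni O$.

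\textbf{Step 3 (Gauss--Bonnet).} Each of the two cells $R_1,R_2$ of the $\Theta$ is a topological disk bounded by two arcs meeting at $O_1$ and at $O_2$ with interior angle $120^\circ$. Along a shrinker the geodesic curvature relative to a region $R$ is $k_g=\langle\vec k\mid N_{\mathrm{in}}\rangle=-\langle\gamma\mid N_{\mathrm{in}}\rangle$, so the divergence theorem gives $\int_{\partial R}k_g\,ds=\int_{\partial R}\langle\gamma\mid N_{\mathrm{out}}\rangle\,ds=2\,\mathrm{Area}(R)$, and Gauss--Bonnet in the flat plane (two corners of exterior angle $\pi/3$) yields $\mathrm{Area}(R_1)=\mathrm{Area}(R_2)=\tfrac{2\pi}{3}$; likewise the region enclosed by the outer loop $\gamma^1\cup\gamma^3$, whose two corners are reflex of exterior angle $-\pi/3$, has area $\tfrac{4\pi}{3}$. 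Together with the symmetry of Step 2 and the sign--constancy of the curvatures, these identities pin down tightly the oscillation ranges $r_{\min},r_{\max}$, the total turning $\int_{\gamma^i}k^i\,ds$ of each arc, the lengths, and the position of $O$ relative to $\ell$ and to $O_1,O_2$.

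\textbf{Step 4 (the contradiction, and the main obstacle).} Finally one shows that these data cannot be realized by an \emph{embedded} network: one exploits the monotonicity of the tangent angle along each type-(ii) arc (as $k$ has constant sign) and the monotonicity of $|\gamma|$ and $|k|$ between consecutive turning points of $|\gamma|$ (from $k_s=k\,\langle\gamma\mid\tau\rangle$), and, combining this with the prescribed exit directions at $O_1$, the turning and area constraints of Step 3, and the $120^\circ$ condition that must \emph{also} hold at $O_2$, one is driven either to violate that condition at $O_2$ or to force two of the arcs to cross. I expect this last step to be the real difficulty: turning the slogan ``the arcs cannot close up into an embedded $\Theta$'' into a proof requires a careful quantitative study of the Abresch--Langer ODE (two--sided bounds on the turning of an arc with prescribed endpoint data, sharp control of $|\gamma|$ and $|k|$ on sub-arcs) and an exhaustive treatment of the few sub-cases surviving Step 2 --- most notably the no--radial--segment case, where even establishing the reflection symmetry used throughout, by a moving--plane argument across the triple junctions, is itself delicate.
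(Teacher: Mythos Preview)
Your outline has genuine gaps at the two places you yourself flag. In Step~2, the symmetry claim in the no--radial--segment case is asserted (``one reaches the same conclusion through the Alexandrov moving--plane method'') but not proved; moving planes across triple junctions is not routine, and nothing in the paper suggests that a hypothetical $\Theta$--shrinker must be reflection--symmetric. In Step~4 you write ``one shows that these data cannot be realized'' and then immediately concede that this is ``the real difficulty'' requiring ``a careful quantitative study of the Abresch--Langer ODE'' --- that is, Step~4 is a statement of what remains to be done, not a proof. The Gauss--Bonnet identity $\mathrm{Area}(R_j)=2\pi/3$ in Step~3 is correct, but it is a single scalar constraint and does not, even together with symmetry, ``pin down tightly'' the turning, lengths, and oscillation ranges as you claim; there is still (at least) a one--parameter family of candidate data to eliminate.

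The paper's proof is along entirely different lines and uses neither symmetry nor Gauss--Bonnet. The heart is a single quantitative lemma (Lemma~\ref{lemma:zucca}): on any arc of a shrinker on which $\rho$ is monotone, if at one endpoint the angle between $\gamma$ and $\gamma_s$ is $\le\pi/3$ (equivalently $\rho_s\ge\tfrac12$), then the polar angle swept is strictly less than $\pi/2$. This is reduced to the one--parameter inequality $J(R)<\pi/2$ for an explicit improper integral and is proved by a computer--assisted interval--arithmetic computation (Section~\ref{sec:proof of lemma zucca}). Given this lemma, the non--existence follows from a short geometric case analysis (Section~\ref{sec:pof}): one locates the origin relative to the three arcs, and the $120^\circ$ condition at the junctions forces $\rho_s\ge\tfrac12$ (or $\le-\tfrac12$) at the relevant endpoints; then either Lemma~\ref{lemma:zucca} directly contradicts the polar angle the outer arc must sweep (Case~1), or a comparison of the Abresch--Langer half--period integrals $\mathcal I(\mathcal F_2)$, $\mathcal I(\mathcal F_3)$ of the inner and lower arcs contradicts the monotonicity of $\mathcal I$ from Proposition~\ref{prop:mI} (Case~2). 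So the ``careful quantitative study'' you anticipate in Step~4 is indeed unavoidable, but it is isolated into one sharp inequality rather than spread over several sub--cases, and it is handled rigorously by interval arithmetic.
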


The main motivation for this problem is given by the fact that for an evolving
network with at most {\em two} triple junctions the so called {\em
  multiplicity--one conjecture} holds (see~\cite{mannovplusch}),
saying that any limit shrinker of a sequence of rescalings of the network at
different times is again a ``genuine'' embedded network without ``double'' or
``multiple'' curves (curves that in such convergence go to
coincide). This is a key point in singularity analysis (in general, for
mean curvature flow), together with the classification of these limit
shrinkers, which is complete after our result Theorem~\ref{thm:main}, for such ``low complexity'' networks, thus leading to a full
description of their motion in~\cite{mannovplu}. Moreover, in the general case, the ``generic'' singularity should be (locally) the collapse of
a single curve (only two triple junction colliding), hence the study of the evolution of
networks with only two triple junctions is quite interesting since
they locally describe what happens at a ``typical'' singular
time. 

The line to show Theorem~\ref{thm:main} will be first looking at the
properties that an hypothetical $\Theta$--shrinker must satisfy and
study the possible geometric structures that {\em a priori} it could
have (Sections~\ref{sec:Cp} and~\ref{sec:pof}). Such analysis will
reduce the proof of non--existence to show that a certain parametric
integral is always smaller than $\pi/2$, for every value of the
parameter. Section~\ref{sec:proof of lemma zucca} is devoted to show
such an estimate, mixing some approximation techniques and numerical
computations based on {\em interval arithmetic}.

\subsection{Interval arithmetic}

When performing standard analytical computations on a machine, the
computer routines repeatedly introduce unpleasant errors. This cannot
be avoided since, for instance:
\begin{itemize}
\item the real numbers that a machine can handle exactly in
  floating--point binary representation are only a finite subset of
  the rationals;
\item transcendental functions cannot be computed exactly and have to
  be replaced by an approximation based on a finite Taylor expansion.
\end{itemize}
While these limitations make it impossible to get an exact result from
-- basically -- any numerical computation on a machine, it is however
possible to guarantee that the value of a given analytical expression
is included between two values that can be represented \emph{exactly}
by the computer, providing rigorous upper and lower bounds.

The term ``interval arithmetic'' refers to the arithmetic of
quantities whose value lies within a known interval, although the
exact value of such quantities is not known. The simple considerations
presented above clearly demonstrate why the theory and implementation
of interval arithmetic is a fundamental tool for making rigorous
computer--assisted proofs in mathematical analysis.

\begin{rem}
Interval arithmetic for rigorous computer--assisted proofs has been
used in several areas of mathematics (see, for example, the
reviews~\cite{Moore91} and~\cite{FefSec96}).
See~\cite{Fousse2007} for documentation about correct rounding of elementary functions. 
Note that all the computations of the present paper that we made with the computer 
involve only elementary functions (i.e. $x+y$, $x \cdot y$, $x^2$,
$1/x$, $\sqrt{x}$, $\exp(x)$, $\log(x)$, $\arcsin(x)$) and constants
($\pi$) whose correct rounding is guaranteed by the IEEE Standard for
Interval Arithmetic, IEEE~1788--2015~\cite{IEEE1788}. 
To perform interval arithmetic, we used the software GNU
Octave 4.0.0 with the package ``interval'', which is conforming
(see~\cite{conforming}) to the standard IEEE~1788--2015.

We give a basic example of the use of {\em interval arithmetic}: 
to deal with the constant $\sqrt{3}$, 
we write the command ``sqrt(infsup($3$))'': writing ``infsup($3$)''
the integer number $3$ is transformed into the interval $[3,3]$ (which
is just the singleton $\{ 3 \}$ since the integer number $3$ is
exactly representable by the computer), 
then the function ``square root'' applies to the interval $[3,3]$ and
it gives a correct rounding of the exact mathematical value of
$\sqrt{3}$, namely the output is an interval $[a,b]$ such that $a^2 <
3 < b^2$ and $a,b$ are exactly representable in binary form.

The codes we have used are in Section~\ref{sec:codes}.
\end{rem}

\medskip

\begin{ackn} We wish to thank Matteo~Novaga and Alessandra~Pluda for
  several discussions on the subject of this paper.\\
This research was financially supported by UniNA and Compagnia di
San~Paolo, in the frame of Programme~STAR, by the European Research Council
under FP7 (ERC~Project 306414) and by PRIN~2012 ``Variational and perturbative aspects of nonlinear
differential problems''.
\end{ackn}

\section{Basic properties of shrinking curves and notation}
\label{sec:Cp}

Consider a shrinking curve $\gamma:I\to\R^2$ parametrized in
arclength $s$, where $I\subset\R$ is an interval. We denote with
$R:\R^2\to\R^2$ the counterclockwise rotation of $90$ degrees. Then, the relation
$$
\gamma_{ss}=\frac{d^2\gamma}{ds^2}=k=-\langle\gamma\,\vert\,\nu\rangle=-\Bigl\langle\gamma\,\Bigl\vert\,R\Bigl(\frac{d\gamma}{ds}\Bigr)\Bigr\rangle
$$ 
gives an ODE satisfied by $\gamma$. It follows that the curve is smooth
and it is not difficult to see that for every point $x_0\in\R^2$ and
unit velocity vector $\tau_0$, there exists a unique shrinking curve
(solution of such ODE) parametrized in arclength, passing at $s=0$
through the point $x_0$ with velocity $\tau_0$, defined for all $s\in\R$.

Differentiating in arclength the equation
$k=-\langle\gamma\,\vert\,\nu\rangle$, 
we get the ODE for the curvature $k_s=k\langle
\gamma\,\vert\,\tau\rangle$. Suppose that at
some point $k=0$, then it must also hold $k_s=0$ at the same point,
hence, by the uniqueness theorem for ODEs we conclude that $k$ is
identically zero and we are dealing with a line $L$ which, as $\langle
x\,\vert\,\nu\rangle=0$ for every $x\in L$, must contain the origin
of $\R^2$.

So we suppose that $k$ is always nonzero and, by looking at the
structural equation $k+\langle\gamma\,\vert\,\nu\rangle=0$, 
we can see that the curve is then strictly
convex with respect to the origin of $\R^2$. Another consequence (by
the uniqueness theorem for ODE) is that the curve must be symmetric
with respect to any critical point (maximum or minimum) of its curvature
function: Notice that if the curve is not a piece of a circle, they 
are all nondegenerate and isolated (if the curve has
bounded length, their number is finite).

Computing the derivative of $\vert\gamma\vert^2$,
\begin{equation*}
\frac{d\vert\gamma\vert^2}{ds}=2\langle\gamma\,\vert\,\tau\rangle=2k_s/k=2\frac{d\log{k}}{ds}\,
\end{equation*}
we get $k=Ce^{\vert \gamma\vert^2/2}$ for some constant $C\in\R$, that
is, the quantity 
\begin{equation}\label{energyeq}
{\mathcal{E}}={\mathcal{E}}(\gamma):=ke^{-\vert \gamma\vert^2/2},
\end{equation}
that we call {\em Energy}, is constant along the curve. 
Equivalently,  $\langle\gamma\,\vert\,\nu\rangle e^{-\vert
  \gamma\vert^2/2}$ is constant. A solution $\g$ has positive energy
if $k > 0$, so that $\g$ runs counterclockwise around the origin,  
$\g$ has negative energy if $k<0$, so that $\g$ runs clockwise around
the origin, $\g$ has energy zero if $k = 0$, so that $\g$ is a piece
of a straight line through the origin.

We consider now a new coordinate $\theta=\arccos{\langle
  e_1\,\vert\,\nu\rangle}$; this can be done for the whole curve 
as we know that it is convex (obviously, $\theta$ is only locally
continuous, since it ``jumps'' after a complete round).

Differentiating with respect to the arclength parameter we have $\frac{d\theta}{ds}=k$ and
\begin{equation}\label{eqqq}
k_\theta=k_s/k=\langle\gamma\,\vert\,\tau\rangle\qquad
k_{\theta\theta}=\frac{1}{k}\frac{dk_\theta}{ds}
=\frac{1+k\langle\gamma\,\vert\,\nu\rangle}{k}
=\frac{1}{k}-k.
\end{equation}
Multiplying both sides of the last equation by $2k_\theta$ we get
$\frac{d\,}{d\theta}[k_\theta^2+k^2-\log{k^2}]=0$, that is,
the quantity
\begin{equation*}
E:=k_\theta^2+k^2-\log{k^2}
\end{equation*}
is constant along all the curve. Notice that such quantity $E$ cannot be less than
1 (if $k\not=0$), moreover, if $E=1$ we have that $k^2$ must be constant and equal to one
along the curve, which consequently must be a piece of the unit circle centered
at the origin of $\R^2$.

As $E\geq 1$, it follows that $k^2$ is uniformly bounded from above
and away from zero, hence, recalling that $k={\mathcal{E}}e^{\vert \gamma\vert^2/2}$, the curve
$\gamma$ is contained in a ball of $\R^2$ (and it is outside some
small ball around the origin).

Since we are interested in the curves of a non--trivial connected,
compact ($\Theta$--shaped), regular network, there will be no unbounded lines or
complete circles and all the curves of the network will be images of a
closed bounded interval, once parametrized in arclength.

Resuming, either $\gamma$ is a segment or 
$k^2>0$, the equations~\eqref{eqqq} hold, the Energy
${\mathcal{E}}=ke^{-\vert \gamma\vert^2/2}$ and the quantity
$E=k_\theta^2+k^2-\log{k^2}>1$ are constant along the curve, where $\theta=\arccos{\langle
  e_1\,\vert\,\nu\rangle}$. Moreover, the curve is locally symmetric with respect to the critical
points of the curvature, hence the curvature $k(\theta)$ is
oscillating between its maximum and its minimum.

Suppose now that $k_{\min} < k_{\max}$ are these two consecutive critical
values of $k$. It follows that they are two distinct positive zeroes
of the function $k_\theta^2=E+\log{k^2}-k^2$, when $E>1$, with $0 < k_{\min} < 1 <
k_{\max}$.\\
We have then that the change $\Delta\theta$ in the angle $\theta$ along
the piece of curve delimited by two consecutive points where the
curvature assumes the values $k_{\min}$ and $k_{\max}$, is given by the integral
\begin{equation} \label{Delta theta}
\Delta\theta = I(E)=\int_{k_{\min}}^{k_{\max}}\frac{dk}{\sqrt{E-k^2+\log{k}^2}}.
\end{equation}

\begin{prop}[Abresch and Langer~\cite{ablang1}]\label{prop:I} 
The function $I:(1,+\infty)\to\R$ satisfies
\begin{enumerate}
\item $\lim_{E\to 1^+} I(E)=\pi/\sqrt{2}$,
\item $\lim_{E\to +\infty} I(E)=\pi/2$,
\item $I(E)$ is monotone nonincreasing.
\end{enumerate}
As a consequence $I(E)>\pi/2$.
\end{prop}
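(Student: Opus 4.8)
The plan is to reduce the entire statement to properties of a single fixed one–variable function. Recalling that $I(E)=\int_{k_{\min}}^{k_{\max}}\frac{dk}{\sqrt{E-k^2+\log k^2}}$ and that here $\log k^2=2\log k$, set $f(k):=k^2-2\log k$, so $f'(k)=2(k-1/k)$ vanishes only at $k=1$, where $f$ attains a nondegenerate minimum $f(1)=1$ (note $f''(1)=4$); thus for $E>1$ the equation $f(k)=E$ has exactly two roots $0<k_{\min}<1<k_{\max}$ and $I(E)=\int_{k_{\min}}^{k_{\max}}\big(E-f(k)\big)^{-1/2}\,dk$. The first step is to ``linearize'' the well $f$ through the change of variable
\[
\xi:=\mathrm{sgn}(k-1)\,\sqrt{f(k)-1},
\]
which is a smooth strictly increasing bijection of $(0,\infty)$ onto $\mathbb R$; writing $k=k(\xi)$ for its inverse and $G(\xi):=\tfrac{dk}{d\xi}=\tfrac{2\xi}{f'(k(\xi))}>0$, one gets $E-f(k)=(E-1)-\xi^2$, the endpoints $k_{\min},k_{\max}$ correspond to $\xi=\mp\sqrt{E-1}$, and the substitution $\xi=\sqrt{E-1}\,\sin\phi$ then yields the clean formula
\[
I(E)=\int_{-\pi/2}^{\pi/2}G\big(\sqrt{E-1}\,\sin\phi\big)\,d\phi .
\]
A short computation — Taylor expansion of $f$ at $k=1$, and the asymptotics $k(\xi)\sim\xi$ as $\xi\to+\infty$, $k(\xi)\sim e^{-\xi^2/2}$ as $\xi\to-\infty$ — shows that $G$ is a fixed continuous bounded function with $G(0)=1/\sqrt2$, $\lim_{\xi\to+\infty}G(\xi)=1$ and $\lim_{\xi\to-\infty}G(\xi)=0$.

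Granting this representation, items (1) and (2) are immediate by dominated convergence in the last integral: as $E\to1^+$ the integrand tends pointwise to $G(0)=1/\sqrt2$, giving $I(E)\to\pi/\sqrt2$; as $E\to+\infty$ it tends to $1$ on $(0,\pi/2]$ and to $0$ on $[-\pi/2,0)$, giving $I(E)\to\pi/2$.

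The core of the argument is the monotonicity (3). Since the domain of integration is now fixed and $G\in C^\infty$, one may differentiate under the integral sign; symmetrizing in $\phi\mapsto-\phi$ and integrating by parts (using that $J(\xi):=G(\xi)+G(-\xi)$ is even, so $J'(0)=0$) leads to
\[
\frac{dI}{dE}=\frac{1}{2(E-1)}\int_{0}^{\sqrt{E-1}}J''(\xi)\,\sqrt{(E-1)-\xi^{2}}\;d\xi ,
\]
so $I$ is nonincreasing once one proves $\int_0^a J''(\xi)\sqrt{a^2-\xi^2}\,d\xi\le 0$ for every $a>0$. I expect this to be the main obstacle, and it is \emph{not} a pointwise statement: from the asymptotics of $G$ one sees that $J$ decreases from $J(0)=\sqrt2$ but eventually increases back up to its limit $1$, so $J''$ changes sign. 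The inequality must instead be obtained from the identity $\int_0^\infty J''=0$ combined with the fact that the weight $\sqrt{a^2-\xi^2}$ is decreasing, hence emphasises the initial region where $J''<0$. Writing $J$ explicitly in terms of the two inverse branches of $f$ turns this into a concrete transcendental inequality for those branches, which one then verifies by a careful analysis — precisely the delicate estimate of Abresch–Langer~\cite{ablang1}; consistently with the method of the present paper, it could alternatively be reduced to finitely many elementary one–variable bounds checked by interval arithmetic.

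Finally, for the last assertion, (2) and (3) give $I(E)\ge\lim_{E'\to+\infty}I(E')=\pi/2$ for all $E>1$; since $I$ is real–analytic on $(1,+\infty)$ and, by (1), not constant, it cannot coincide with $\pi/2$ on any subinterval, whence $I(E)>\pi/2$ for every $E$.
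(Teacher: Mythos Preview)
The paper does not prove this proposition: it is quoted with attribution to Abresch--Langer and used as a black box, so there is no ``paper's own proof'' against which to compare your argument.

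Your change of variable $\xi=\mathrm{sgn}(k-1)\sqrt{f(k)-1}$ and the representation $I(E)=\int_{-\pi/2}^{\pi/2}G\bigl(\sqrt{E-1}\,\sin\phi\bigr)\,d\phi$ are correct, and items (1), (2) and the final strict inequality follow cleanly from them (your real--analyticity argument for strictness is valid: if $I(E_0)=\pi/2$ then monotonicity forces $I\equiv\pi/2$ on $[E_0,\infty)$, contradicting (1) by analytic continuation). The genuine gap is precisely where you flag it: item (3). You reduce monotonicity to showing $\int_0^a J''(\xi)\sqrt{a^2-\xi^2}\,d\xi\le 0$ for all $a>0$, but then you do not prove this inequality; you invoke ``the delicate estimate of Abresch--Langer'', which is circular, since the proposition \emph{is} their result.

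Your heuristic can in fact be upgraded to a proof, but only after an additional step you have not supplied. If one knew that $J''$ changes sign exactly once on $(0,\infty)$, say at $\xi=c$ with $J''<0$ on $(0,c)$ and $J''>0$ on $(c,\infty)$, then for any nonnegative nonincreasing weight $w$ on $[0,\infty)$ one gets $\int_0^\infty J''w\le w(c)\int_0^\infty J''=w(c)\bigl(J'(\infty)-J'(0)\bigr)=0$, and taking $w(\xi)=\sqrt{(a^2-\xi^2)_+}$ yields exactly your inequality. However, establishing this single--sign--change property of $J''$ is itself a nontrivial analytic statement about the two inverse branches of $f$ (equivalently, about $G''(\xi)+G''(-\xi)$), and neither your sketch nor the suggestion ``check it by interval arithmetic'' constitutes a proof. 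As written, the proof of (3) --- and hence of the proposition --- is incomplete.
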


We write now the curve $\g$ in polar coordinates, that is, $\g(s)=(\rho(s) \cos \th(s),\rho(s) \sin \th(s))$, then, 
the arclength constraint and the shrinker equation~\eqref{1102.1-bis} become
\begin{equation}\label{arclength}
\rho_s^2 + \rho^2 \th_s^2 = 1,
\end{equation}
\begin{equation*}
\rho^2 \th_s +\rho \rho_{ss} \th_s
- 2 \rho_s^2 \th_s - \rho^2 \th_s^3 - \rho \rho_s \th_{ss} = 0,
\end{equation*}
moreover, 
\begin{equation} \label{1602.1}
\cos \big( \text{angle between $\g$ and $\g_s$} \big) 
= \frac{\g \cdot \g_s}{|\g| |\g_s|} \, 
= \rho_s.
\end{equation}
Notice that shrinking curves with positive energy have $\th_s > 0$
everywhere, indeed, either $\th_s$ is always different by zero or the
curve is a segment of a straight line for the origin of $\R^2$.

The curvature and the Energy $\mE=ke^{-\vert \gamma\vert^2/2}$ are given by 
\begin{equation} \label{1702.2}
k = \rho^2 \th_s, \qquad 
\mE = \rho^2 \th_s e^{-\frac12 \rho^2}
\end{equation}
and, when the energy is positive, it will be useful to consider also the quantity $\mF:=- \log(\mE)$,
that is, 
\begin{equation}\label{energy}
\mF = - \log(\mE) = \frac12 \rho^2 - \log (\rho^2 \th_s).
\end{equation}
Since $0 < \rho\th_s \leq 1$, by equation~\eqref{arclength}, one has
\begin{equation*}
\mF \geq \frac12 \rho^2 - \log (\rho) \geq \frac12.
\end{equation*}

Let us assume that $\g$ is a shrinking curve with $k>0$ (the
assumption on the sign of $k$ is not restrictive, up to a change of
orientation of the curve). Then, by the definition of the
Energy~\eqref{energyeq}, it is immediate to see that the points where $k$ attains its maximum (resp. minimum)
coincide with the points where $\rho$ attains its maximum
(resp. minimum). Thus, at any extremal point of $k$ there hold 
$k_\theta=0$, $\rho_s = 0$ and also $\rho \th_s=1$, by
equation~\eqref{arclength}, hence, by equation~\eqref{1702.2}, we have
$k=\rho$. Then, computing $E$ and $\mF$ at such point
(clearly, $k_\theta=0$), we get
$$
E=k^2-2\log k\qquad\qquad\text{ and }\qquad\qquad \mF = k^2/2 - \log k,
$$
that is, $E=2\mF= \log \bigl( \frac{1}{\mE^2} \bigr)$.

Since the Energy and the quantity $\mF$ are constant, 
this relation must hold along all the curve $\gamma$ and ${\mathcal{F}}=\rho_{\rm min}^2/2-\log\rho_{\rm min}=\rho_{\rm max}^2/2-\log\rho_{\rm max}$.

Since the function $\mu(t)=t^2/2-\log t$ is strictly convex with a minimum value $1/2$ at $t=1$, to each value of $\mF\geq\frac12$, there correspond
two values $\rho_{\rm min}(\mF)$ and $\rho_{\rm max}(\mF)$ which are
the admissible (interior) minimum and maximum of $\rho$ on $\gamma$, 
with $\rho_{\rm min}(\mF)<1<\rho_{\rm max}(\mF)$ if $\mF>\frac12$. It 
follows easily that $\rho_{\rm  max}:(1/2,+\infty)\to(1,+\infty)$ is an increasing function and
$\rho_{\rm min}:(1/2,+\infty)\to(0,1)$ is a decreasing function. Viceversa, the quantity ${\mathcal{F}}$ can be seen as a decreasing function of $\rho_{\rm min}\in(0,1]$ and an increasing function of $\rho_{\rm max}\in[1,+\infty)$.

Let $s_{\rm min}, s_{\rm max} \in \R$ 
with $s_{\rm min} < s_{\rm max}$ be two consecutive (interior) extremal points of
$\rho$ (hence, also of $k$) such that 
$\rho(s_{\rm min}) = \rho_{\rm min}(\mF)$, 
$\rho(s_{\rm max}) = \rho_{\rm max}(\mF)$.
Since at the interior extremal points of $\rho$ the vectors $\g,\g_s$ must be orthogonal, 
it follows that the quantity considered in formula~\eqref{Delta theta} satisfies
\begin{equation}\label{mImF}
\Delta\theta = \int_{s_{\rm min}}^{s_{\rm max}} \th_s(s) \, ds\,:=  \mI(\mF),	
\end{equation}
that is, the integral $\mI(\mF)$ is the variation of the angle $\th$ on the shortest arc such that $\rho$ passes from $\rho_{\rm min}$ to $\rho_{\rm max}$.\\
Then, by the above discussion, $\mI(\mF)=I(E)=I(2\mF)$ and we can
rephrase Proposition~\ref{prop:I} in terms of the integral $\mI(\mF)$ as follows.

\begin{prop}\label{prop:mI}
The function $\mI:(1/2,+\infty)\to\R$ satisfies
\begin{enumerate}
\item $\lim_{\mF\to (1/2)^+} \mI(\mF) = \frac{\pi}{\sqrt{2}}$,
\item $\lim_{\mF\to +\infty} \mI(\mF)= \frac{\pi}{2}$,
\item $\mI(\mF)$ is monotone nonincreasing.
\end{enumerate}
As a consequence $\mI(\mF) > \frac{\pi}{2}$ for all $\mF > \frac12$.
\end{prop}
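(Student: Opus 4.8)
The plan is to deduce Proposition~\ref{prop:mI} directly from Proposition~\ref{prop:I} together with the identification $\mI(\mF) = I(2\mF)$ established in the discussion preceding the statement. The substitution $E = 2\mF$ is an affine, increasing bijection from $(1/2, +\infty)$ onto $(1, +\infty)$, so each of the three claimed properties transfers from $I$ to $\mI$ essentially by composition; the remaining work is mostly bookkeeping about limits and monotonicity under this reparametrization.

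\medskip

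\textbf{Step 1: Record the identity $\mI(\mF) = I(2\mF)$.} From equations~\eqref{Delta theta} and~\eqref{mImF}, both $\mI(\mF)$ and $I(E)$ measure the same geometric quantity --- the variation $\Delta\theta$ of the angle along the shortest arc on which $\rho$ (equivalently $k$) runs from its minimum to its maximum --- once one knows that $E = 2\mF$ along the curve. This last relation was derived by evaluating $E$ and $\mF$ at an extremal point of $\rho$, where $k_\theta = 0$, $\rho_s = 0$, $\rho\th_s = 1$, hence $k = \rho$, giving $E = k^2 - 2\log k = 2(k^2/2 - \log k) = 2\mF$; since both $E$ and $\mF$ are constant along $\gamma$, the identity holds throughout. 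Thus $\mI(\mF) = I(2\mF)$ for every $\mF > 1/2$.

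\medskip

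\textbf{Step 2: Transfer the three properties.} For the limits: as $\mF \to (1/2)^+$ we have $2\mF \to 1^+$, so by Proposition~\ref{prop:I}(1), $\mI(\mF) = I(2\mF) \to \pi/\sqrt{2}$; as $\mF \to +\infty$ we have $2\mF \to +\infty$, so by Proposition~\ref{prop:I}(2), $\mI(\mF) = I(2\mF) \to \pi/2$. For monotonicity: $\mF \mapsto 2\mF$ is strictly increasing, and $I$ is monotone nonincreasing by Proposition~\ref{prop:I}(3), so their composition $\mI$ is monotone nonincreasing on $(1/2, +\infty)$. Finally, since $\mI$ is nonincreasing and its infimum (the limit at $+\infty$) equals $\pi/2$, we get $\mI(\mF) > \pi/2$ for all $\mF > 1/2$; strictness follows because if $\mI(\mF_0) = \pi/2$ at some finite $\mF_0$, then by monotonicity $\mI \equiv \pi/2$ on $[\mF_0, +\infty)$, which would force the corresponding curves to be arcs of circles centered at the origin (the equality case $E = 1$), contradicting $\mF > 1/2$, i.e. $E > 1$; alternatively one invokes the strict monotonicity of $I$ away from the degenerate case as already used by Abresch--Langer.

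\medskip

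\textbf{Main obstacle.} There is no serious obstacle: the entire content is already contained in Proposition~\ref{prop:I}, and the only subtlety is ensuring the endpoint behavior of the affine change of variables is handled cleanly (that $\mF \to (1/2)^+$ corresponds exactly to $E \to 1^+$ and that the open interval $(1/2, +\infty)$ maps onto $(1, +\infty)$, matching the domain of $I$ in Proposition~\ref{prop:I}). The one point deserving a word of care is the strictness of the final inequality $\mI(\mF) > \pi/2$: a merely nonincreasing function with limit $\pi/2$ at infinity could in principle attain the value $\pi/2$, so one should note either the strict monotonicity of $I$ on $(1,\infty)$ or the geometric fact that $\mI(\mF) = \pi/2$ is incompatible with $\mF > 1/2$.
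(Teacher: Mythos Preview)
Your proposal is correct and follows exactly the paper's approach: the paper does not give a separate proof of this proposition but simply states, in the line immediately preceding it, that $\mI(\mF)=I(E)=I(2\mF)$ and that Proposition~\ref{prop:I} can therefore be ``rephrased'' in these terms. Your write-up is in fact more careful than the paper's, since you explicitly address the strictness of the final inequality (which the paper claims as a consequence of mere nonincreasing monotonicity without comment); invoking the strict monotonicity from Abresch--Langer, as you do at the end, is the cleanest justification.
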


\section{The proof of Theorem~\ref{thm:main}}
\label{sec:pof}

The proof of Theorem~\ref{thm:main} is based on the following lemma whose proof is postponed to Section~\ref{sec:proof of lemma zucca}.

\begin{lemma} 
\label{lemma:zucca}
Let $\g$ be a shrinking curve, parametrized counterclockwise by arclength, with positive curvature and let $(s_0, s_1)$ be an interval where $s \mapsto \rho(s)$ is increasing. 
If $\rho_s(s_0) \geq \frac12$, 
namely, if the angle formed by the vectors $\g(s_0)$ and $\g_s(s_0)$ is $\leq \frac{\pi}{3}$, 
then 
\begin{equation} \label{1503.3}
\int_{s_0}^{s_1} \th_s(s) \, ds < \frac{\pi}{2}.
\end{equation}
Similarly, if $s \mapsto \rho(s)$ is decreasing on $(s_0, s_1)$ and $\rho_s(s_1) \leq - \frac12$, 
namely the angle formed by the vectors $\g(s_1)$ and $\g_s(s_1)$ is $\geq \frac{2\pi}{3}$, 
then the same conclusion holds.
\end{lemma}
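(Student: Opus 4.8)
The plan is to reduce the lemma, via a change of variable and a monotonicity argument, to a single one–parameter family of definite integrals, and then to show that every member of that family is strictly less than $\pi/2$; the second step is the one that requires interval arithmetic.

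\emph{Reduction.} On an interval where $\rho$ is increasing one has, by \eqref{arclength}, $\rho_s=\rho^{-1}\sqrt{\rho^2-k^2}$, while $\th_s=k/\rho^2$ and, by \eqref{1702.2} and \eqref{energy}, $k$ is the explicit function of $\rho$ given by $k(\rho)=e^{\rho^2/2-\mF}$, with $\mF$ constant along $\g$. Changing variable from $s$ to $\rho$ therefore yields
\begin{equation*}
\int_{s_0}^{s_1}\th_s(s)\,ds=\int_{\rho(s_0)}^{\rho(s_1)}\frac{k(\rho)\,d\rho}{\rho\sqrt{\rho^2-k(\rho)^2}}\,.
\end{equation*}
Since all values of $\rho$ along $\g$ lie in $[\rho_{\min}(\mF),\rho_{\max}(\mF)]$ we have $\rho(s_1)\le\rho_{\max}(\mF)$; and since $\rho_s^2=1-k(\rho)^2/\rho^2$ is unimodal in $\rho$ on $[\rho_{\min},\rho_{\max}]$ (vanishing at the endpoints and maximal at $\rho=1$), the hypothesis $\rho_s(s_0)\ge\tfrac12$ forces $\rho(s_0)\ge\rho_*(\mF)$, where $\rho_*(\mF)\le 1$ is the smallest value of $\rho$ with $\rho_s=\tfrac12$; such a $\rho_*$ exists only when $\mF\ge\mF_0:=\tfrac12\big(1+\log\tfrac43\big)>\tfrac12$ (for $\mF<\mF_0$ one has $\rho_s<\tfrac12$ everywhere and the hypothesis is vacuous). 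As the integrand is positive, it thus suffices to prove
\begin{equation*}
\mathcal{P}(\mF):=\int_{\rho_*(\mF)}^{\rho_{\max}(\mF)}\frac{k(\rho)\,d\rho}{\rho\sqrt{\rho^2-k(\rho)^2}}<\frac{\pi}{2}\qquad\text{for all }\mF\ge\mF_0\,.
\end{equation*}
The decreasing case reduces to the very same inequality: there $\rho_s=-\rho^{-1}\sqrt{\rho^2-k^2}$, the hypothesis $\rho_s(s_1)\le-\tfrac12$ forces $\rho(s_1)\ge\rho_*(\mF)$, and $\rho(s_0)\le\rho_{\max}(\mF)$.

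\emph{The estimate $\mathcal{P}(\mF)<\pi/2$.} The integrand of $\mathcal{P}$ is not elementary and has an integrable $(\rho_{\max}-\rho)^{-1/2}$ singularity at the upper endpoint, so I would first remove the singularity by a square–root substitution near $\rho_{\max}$, and then split the parameter range at an explicit $\mF_1>\mF_0$. On the compact range $\mF\in[\mF_0,\mF_1]$ I would bound the (now bounded) integrand from above on a fine enough partition of the parameter interval and of the integration interval by interval arithmetic, rigorously enclosing also the roots $\rho_*(\mF)$ and $\rho_{\max}(\mF)$ of $e^{\rho^2-2\mF}=\tfrac34\rho^2$ and of $\tfrac12\rho^2-\log\rho=\mF$, as well as the constant $\pi$, so as to certify $\mathcal{P}(\mF)<\pi/2$ there. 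For $\mF>\mF_1$ I would argue analytically: passing to $k$ as variable as in \eqref{Delta theta} and using that the angle between $\g$ and $\g_s$ is $\pi/3$ at $s_0$ and $\pi/2$ at the endpoint, one finds $\mathcal{P}(\mF)=\int_{k_*}^{k_{\max}}\big(2\mF-k^2+\log k^2\big)^{-1/2}\,dk-\tfrac{\pi}{6}$, which is $<I(2\mF)-\tfrac{\pi}{6}\le I(2\mF_1)-\tfrac{\pi}{6}$ because $k_*\in(k_{\min},k_{\max})$ and $I$ is nonincreasing (Proposition~\ref{prop:I}); since $I(E)\to\pi/2$, one may choose $\mF_1$ so that $I(2\mF_1)\le\tfrac{2\pi}{3}$, and this last inequality is again certified by interval arithmetic (or by the known asymptotics of $I$).

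\emph{Main obstacle.} The reduction above is routine, and the whole difficulty sits in the inequality $\mathcal{P}(\mF)<\pi/2$: the integral has no closed form, it is singular at one endpoint, and it must be controlled with sufficient precision \emph{uniformly} over an unbounded parameter range. This is what forces the desingularizing substitution, the partitioning, and the rigorous enclosures of $\rho_*(\mF)$, $\rho_{\max}(\mF)$ and $\pi$, and it is the reason the argument ultimately rests on interval arithmetic rather than on an elementary estimate.
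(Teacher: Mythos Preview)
Your reduction is correct and coincides with the paper's: your $\mathcal{P}(\mF)$ is exactly the paper's $J(R)$ under the reparametrization $\mF=\mu(R)$, your $\rho_*(\mF)$ is their $d(R)$, and your threshold $\mF_0$ is their $\mu(\overline R)$. The overall strategy---split the parameter range and certify the compact part by interval arithmetic---is the same.

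There are two differences worth noting. First, for the unbounded range you invoke the identity $\mathcal{P}(\mF)=\int_{k_*}^{k_{\max}}(E-k^2+\log k^2)^{-1/2}\,dk-\tfrac{\pi}{6}$ and then bound by $I(2\mF)-\tfrac{\pi}{6}$, reducing everything to certifying $I(2\mF_1)<\tfrac{2\pi}{3}$ at a single value. The paper notes this identity in a remark but deliberately avoids it, because evaluating $I(E_1)$ rigorously means handling a doubly singular integral; instead the paper bounds $J(R)$ directly for $R\ge 4$ by four elementary pieces $Q_1,\dots,Q_4$, proves each is decreasing in $R$, and evaluates only at $R=4$. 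Your route is conceptually cleaner but pushes the numerical work into $I$; theirs is more hands-on but the final computation is a finite expression in elementary functions. Second, on the compact range the paper does not desingularize by a square-root substitution: it proves that $\rho\mapsto f(\rho,R)$ is strictly convex, so trapezoid sums give rigorous \emph{upper} bounds on the bulk of the integral, and near $\rho=R$ it uses the explicit substitution $\xi=e^{\mu(\rho)-\mu(R)}$, which turns that piece into an $\arcsin$. The convexity lemma is the paper's main technical device for keeping the interval-arithmetic step cheap, and it does not appear in your sketch.
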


\begin{remark}
Proving estimate~\eqref{1503.3} is equivalent to show that
\begin{equation} \label{1503.1}
\int_{s_0}^{s_1} \theta_s(s) \, ds < \frac{2 \pi}{3} ,
\end{equation}
where $\theta(s)$ is the angle formed by $e_1 = (1,0)$ and the normal vector $\nu(s)$. Indeed, clearly 
\[
\int_{s_0}^{s_1} \theta_s(s) \, ds \leq \int_{\s_0}^{\s_1} \theta_s(s) \, ds,
\quad 
\int_{s_0}^{s_1} \th_s(s) \, ds \leq \int_{\s_0}^{\s_1} \th_s(s) \, ds,
\]
where $k(\s_1) = k_{\text{max}}$, 
and $\s_0$ is the maximum $\s\leq s_0$, assuming it exists, such that the angle formed by the vectors 
$\g(\s)$ and $\g_s(\s)$ equals $\frac{\pi}{3}$ 
and the map $s \mapsto \rho(s)$ is increasing on $(\s, \s_1)$. 
Then one observes (by elementary angle geometry) that
\[
\int_{\s_0}^{\s_1} \theta_s(s) \, ds
= \int_{\s_0}^{\s_1} \th_s(s) \, ds + \frac{\pi}{6} .
\]
The integral in~\eqref{1503.1} can be expressed as before 
\begin{equation*}
\int_{s_0}^{s_1} \theta_s(s) \, ds 
= \int_{k(s_0)}^{k(s_1)} \frac{dk}{\sqrt{E-k^2+\log{k}^2}},
\end{equation*}
hence it is bounded by $I(E)$, defined in formula~\eqref{Delta theta} 
(because, in general, $k_{\text{min}} \leq k(s_0) \leq k(s_1) \leq k_{\text{max}}$). 
We know that $I(E) < \frac{\pi}{\sqrt 2}$, but being $\frac{2 \pi}{3} < \frac{\pi}{\sqrt 2}$, estimate~\eqref{1503.1} is not a direct consequence of Proposition~\ref{prop:I}.

Even if such integral is well studied, we found it easier to prove
estimate~\eqref{1503.3} than to show that
\begin{equation*}
\int_{s_0}^{s_1} \theta_s(s) \, ds 
= \int_{k(s_0)}^{k(s_1)} \frac{dk}{\sqrt{E-k^2+\log{k}^2}}<\frac{2\pi}{3}
\end{equation*}
and this is the reason for our introduction and computation in polar coordinates $(\rho,\phi)$.
\end{remark}

\medskip

We assume now that a $\Theta$--shrinker exists, described by three embedded shrinking curves $\g_i:[\underline{s}_i,\overline{s}_i]\to\R^2$, parametrized by arclength, expressed in polar coordinates by $\g_i = (\rho_i \cos(\th_i), \rho_i \sin(\th_i))$, for $i\in \{1,2,3\}$. The two triple junctions will be denoted with $A,B$ and the three curves intersect each other only at $A$ and $B$ (which are their endpoints) forming angles of $120$ degrees. Since the shrinker equation~\eqref{1102.1} is invariant by rotation, we can assume that the segment $\overline{AB}$ is contained in the straight line 
$\{ (x,q) : x \in \R \}$ with $q \geq 0$ and we let $A = (x_A,q)$, $B = (x_B,q)$ with $x_A < x_B$.

We begin with some preliminary elementary lemmas. To simplify the notation, in all this section we will denote the arclength derivative $\frac{d\,}{ds}$ with $'$.

\begin{lemma} \label{lemma:no giro}
For all $i\in\{1,2,3\}$, the curve $\g_i$ is either a straight line or such that 
$$
\biggl| \int^{\overline{s}_i}_{\underline{s}_i} \th_i'(s) \, ds \biggr| \, < 2\pi.
$$
\end{lemma}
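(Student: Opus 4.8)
The plan is to argue that if $\g_i$ is not a straight line through the origin, then the total variation of the angle $\th_i$ along $\g_i$ cannot reach $2\pi$, because a full turn of $\th_i$ forces the curve to self-intersect or to re-enter a configuration incompatible with being a $C^2$ embedded arc joining two distinct triple junctions. First I would recall from Section~\ref{sec:Cp} that a non-straight shrinking curve has $k>0$ (after possibly reversing orientation), is strictly convex with respect to the origin, lies in an annulus $\{\rho_{\rm min}\le \rho\le\rho_{\rm max}\}$, and is locally symmetric about each critical point of $\rho$ (equivalently of $k$). In particular $\rho$ oscillates monotonically between $\rho_{\rm min}$ and $\rho_{\rm max}$, and $\th_s=k/\rho^2>0$ so $\th_i$ is strictly increasing in $s$; hence $\int \th_i'\,ds$ is a positive quantity and the absolute value is harmless.

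Next I would bound this positive quantity. Decompose $[\underline s_i,\overline s_i]$ at the successive extremal points of $\rho$ into monotone sub-arcs; on each maximal monotone sub-arc joining a $\rho_{\rm min}$ point to an adjacent $\rho_{\rm max}$ point (or a truncated piece at the two ends), the angle variation is at most $\mI(\mF)=I(E)$, and by Proposition~\ref{prop:I} (or Proposition~\ref{prop:mI}) we have $I(E)<\pi/\sqrt2$. The key geometric input is then that $\g_i$ can contain at most a bounded number of such full monotone sub-arcs. Indeed, two consecutive extremal points of the same type ($\rho_{\rm min}$ or $\rho_{\rm max}$) are related by the local symmetry, and if the curve performed a full revolution $\Delta\th_i\ge 2\pi$ it would wind around the origin at least once while staying in the fixed annulus; combined with strict convexity this would make the curve a closed convex loop around the origin before reaching $\overline s_i$, contradicting embeddedness of the network (the curve would either close up — forcing it to be a complete Abresch--Langer curve rather than an arc with two endpoints on the line $\{(x,q)\}$ — or cross itself). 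So the number of complete monotone pieces is at most $2$ (equivalently $\Delta\th_i$ is at most a little over $2\cdot\pi/\sqrt2 <2\pi$ plus the two partial end pieces, which still must sum to less than $2\pi$), and a short accounting gives the strict bound $\int_{\underline s_i}^{\overline s_i}\th_i'\,ds<2\pi$.

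The main obstacle I anticipate is making the ``a full turn forces a closed loop / self-intersection'' step fully rigorous: one has to rule out the borderline case where $\Delta\th_i$ is close to but below $2\pi$ coming from several partial pieces, and to be careful that the curve need not be symmetric about a point lying on $\gamma_i$ itself — the relevant symmetry is about the critical points of $\rho$, and one must check how many of those can occur on an embedded arc between $A$ and $B$. I would handle this by noting that by convexity a shrinking arc is a graph over $\th$ (away from the at most two tangency directions where $\rho_s=0$ per revolution), so each value of $\th\bmod 2\pi$ is attained at most once per revolution; an embedded arc therefore cannot revisit a direction, which caps the total revolution strictly below $2\pi$ unless the curve closes up, and a closed embedded convex shrinker is the circle $\SS^1$, excluded since $\g_i$ is a proper arc with endpoints the triple junctions $A\ne B$. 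This yields the claimed strict inequality.
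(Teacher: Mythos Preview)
The central step of your plan fails: embeddedness of the single arc $\g_i$ does \emph{not} prevent $\int \th_i'\,ds \geq 2\pi$. A non-circular shrinking curve is a polar graph $\rho = \rho(\th)$ with $\rho$ periodic in $\th$ of period $2\mI(\mF) \in (\pi, \pi\sqrt{2})$. Start an arc at a point where $\rho = \rho_{\rm min}$ and let $\th$ run over $[\th_0, \th_0 + 2\pi + \epsilon]$ for small $\epsilon>0$: since $2\pi$ is not a multiple of the period, $\rho(\th_0 + 2\pi) > \rho_{\rm min} = \rho(\th_0)$, and by continuity the second lap stays strictly outside the first on the short overlap in direction. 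This arc is embedded yet winds by more than $2\pi$. So ``an embedded arc cannot revisit a direction'' is simply false, and with it your bound on the number of monotone pieces. Strict convexity ($k>0$) does not help either: spirals have $k>0$ everywhere without closing up.

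The paper's proof works differently and crucially uses the \emph{network}, not just $\g_i$. If $\g_1$ winds by at least $2\pi$, its first full lap from $B$ is (using embeddedness of $\g_1$ alone) a spiral, say inward along the ray through $B$. The $120^\circ$ condition at $B$ then forces the initial velocity of at least one of $\g_2, \g_3$ to lie in the half-plane opposite $\g_1'(\underline{s}_1)$ with respect to the line through the origin and $B$; that curve, having nowhere-vanishing $\th'$, must wind around to reach $A$, but the completed lap of $\g_1$ separates it from $A$, forcing an interior intersection with $\g_1$ and contradicting embeddedness of the network. The ingredient missing from your outline is precisely this interaction with the other two curves; no argument confined to $\g_i$ by itself can succeed.
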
 

\begin{proof}
Without loss of generality, assume that all $\g_1, \g_2, \g_3$ start at $B$ and end at $A$, namely 
$\g_i(\underline{s}_i) = B$, $\g_i(\overline{s}_i) = A$, for $i\in\{1,2,3\}$.

Assume, by contradiction, that $\g_1$ is a curve with positive energy and curvature such that 
\[
\int_{\underline{s}_1}^{\overline{s}_1} \th_1'(s) \, ds \geq 2\pi.
\]
Then there exist $\s_1, \t_1 \in S_1$ such that 
\[
\int_{\underline{s}_1}^{\s_1} \th_1'(s) \, ds = 2\pi, 
\quad 
\int_{\t_1}^{\overline{s}_1} \th_1'(s) \, ds = 2\pi.
\]
Since $\g_1$ does not intersect itself, 
one has $(\rho_1(\s_1) - \rho_1(\underline{s}_1)) (\rho_1(\overline{s}_1) - \rho_1(\t_1)) > 0$. 
Assume, without loss of generality, that 
\[
\rho_1(\s_1) < \rho_1(\underline{s}_1), \quad 
\rho_1(\t_1) > \rho_1(\overline{s}_1).
\] 
Now consider the triple junction at the point $B$, the straight line $r$ passing 
through $B$ and the origin, and let $H_1$ and $H_2$ be the open half-planes in which $r$ divides $\R^2$, where $H_1$ is the one containing $\g_1'(\underline{s}_1)$. 
Since the three curves $\g_1$, $\g_2$, $\g_3$ form angles of $\frac{2\pi}{3}$ at $B$, 
at least one among $\g_2'(\underline{s}_2)$ and $\g_3'(\underline{s}_3)$ belongs to $H_2$. 
Without loss of generality, let $\g_2'(\underline{s}_2) \in H_2$. 
Since $\th_2'$ never vanishes, the curve $\g_2$ cannot reach the endpoint $A$ 
without crossing the curve $\g_1$ at some interior point, which is a contradiction.
\end{proof}

\begin{lemma} \label{lemma:ut}
Let $S= [\underline{s},\overline{s}]$ and $\g:S\to\R^2$ be a shrinking curve parametrized by arclength, expressed in polar coordinates by $\g = (\rho \cos(\th), \rho\sin(\th))$. Assume that $\th'(s) > 0$ in $S$ and 
\[
0 < \Delta \leq \pi, \quad \text{where} \ \ 
\Delta := \int_{\underline{s}}^{\overline{s}} \th'(s) \, ds.
\]
Let $L$ be the straight line passing through the two points $\g(\underline{s})$, $\g(\overline{s})$ and $H_1$ and $H_2$ be the two closed half--planes in which $L$ divides the plane $\R^2$. 
Then the arc $\g(S)$ is entirely contained in $H_1$ or $H_2$.

Moreover, if $\Delta < \pi$ and $\g(S)\subset H_1$, then the origin of $\R^2$ belongs to the interior of $H_2$. 
\end{lemma}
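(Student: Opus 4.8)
The plan is to use the fact that the curve $\g$ is convex with respect to the origin (this was established in Section~\ref{sec:Cp}, since $k>0$ along $\g$, being $\th'=k>0$), so that $\g$ is an arc of an embedded convex curve winding monotonically around $O$. First I would observe that, as $\th'(s)>0$ on $S$, the radial angle $\th$ is strictly monotone along the arc, and since $\Delta=\int\th'\,ds\le\pi$ the total angular variation of $\g(S)$ as seen from the origin is at most $\pi$; in particular the arc $\g(S)$ is ``less than half a turn'' around $O$ and cannot contain two antipodal directions from $O$. To get the half--plane statement, I would argue by contradiction: if $\g(S)$ met both open half--planes $\mathring H_1\setminus L$ and $\mathring H_2\setminus L$, then by the intermediate value theorem the arc would cross the line $L$ at an interior point $\g(s^*)$ with $\underline s<s^*<\overline s$. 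But $L$ already meets $\g(S)$ at the two endpoints $\g(\underline s)$ and $\g(\overline s)$, so $L$ would intersect the arc in at least three points; this contradicts the strict convexity of $\g$ with respect to $O$ — more precisely, the strict convexity (no piece is a segment, because $k>0$) together with embeddedness forces any straight line to meet $\g(S)$ in at most two points, unless the three collinear points are arranged so that the middle one coincides with an endpoint, which is excluded here. Hence $\g(S)\subset H_1$ or $\g(S)\subset H_2$.

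For the second assertion, assume $\Delta<\pi$ and $\g(S)\subset H_1$, and suppose for contradiction that the origin does \emph{not} lie in the interior of $H_2$, i.e.\ $O\in H_1$ (it cannot lie on $L$ interior to $H_2$'s interior by hypothesis, and if $O\in L$ we treat it separately below). Consider the two rays from $O$ through the endpoints $\g(\underline s)$ and $\g(\overline s)$: as $s$ runs over $S$, the direction $\frac{\g(s)}{|\g(s)|}$ sweeps monotonically (because $\th$ is monotone) through an angular sector of opening exactly $\Delta<\pi$. The chord $L$ joining the two endpoints, together with the two radial segments $O\g(\underline s)$ and $O\g(\overline s)$, bounds a triangle $T$; since $\Delta<\pi$ the vertex angle at $O$ is less than $\pi$, so $T$ is a genuine (nondegenerate or degenerate) triangle and the convex arc $\g(S)$, starting and ending on two sides of $T$ and staying convex toward $O$, is contained in the closed region of $T$ on the far side of $L$ from $O$ — that is, $\g(S)$ lies in the closed half--plane $H_2$ bounded by $L$, the one \emph{not} containing $O$. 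But we assumed $O\in H_1$, so $\g(S)\subset H_2$; combined with $\g(S)\subset H_1$ this forces $\g(S)\subset L$, contradicting $k>0$. The remaining case $O\in L$ (with $O$ not interior to $H_2$) is handled the same way: then the ``triangle'' $T$ degenerates to a segment along $L$, $\g(S)\subset H_2=$ one of the two half--planes, again forcing $\g(S)\subset L$ and a contradiction. Therefore $O$ must lie in the interior of $H_2$.

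The main obstacle I expect is making the ``at most two intersections with a line'' and the triangle--containment arguments fully rigorous from the convexity established earlier: one must be careful that the relevant convexity is convexity with respect to the origin (monotone winding), not plain convexity of the planar arc, and that the endpoints of the arc may themselves lie on the cutting line $L$, which is exactly why the counted intersection multiplicity is delicate. A clean way to finesse this is to parametrize by the angle $\th$ (legitimate since $\th'>0$) and write $\g(\th)=\rho(\th)(\cos\th,\sin\th)$; then membership of $\g(\th)$ in $H_1$ versus $H_2$ is governed by the sign of an affine function of $\g(\th)$, i.e.\ of an expression of the form $\rho(\th)\cos(\th-\th_0)-d$, and one analyses its zeros using the ODE for $\rho$ (equivalently for $k$) together with the monotonicity of $\rho$ between consecutive extremal points, recovering that it changes sign at most at the two endpoints. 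Everything else is elementary plane geometry of sectors and chords.
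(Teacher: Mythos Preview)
The paper's proof is shorter and sidesteps the difficulty you yourself flag. It observes that the closed region $\mathcal T$ bounded by the arc $\g(S)$ and the two radial segments from the origin to $\g(\underline s)$ and $\g(\overline s)$ is a convex subset of $\R^2$: the cone $\{\th(\underline s)\le\th\le\th(\overline s)\}$ is convex since $\Delta\le\pi$, and the arc bends toward the origin since $k>0$. Convexity of $\mathcal T$ forces the chord from $\g(\underline s)$ to $\g(\overline s)$ to lie inside $\mathcal T$; hence the arc (the outer boundary of $\mathcal T$) lies on one side of $L$ and the origin (the apex of $\mathcal T$) on the other, strictly so when $\Delta<\pi$.

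Your three-intersections argument for the first assertion has a real gap. Positive curvature of an arc does \emph{not} by itself bound the number of intersections with a line: the Rolle step produces two interior points with tangent parallel to $L$, but this contradicts strict monotonicity of the tangent direction only if the total tangent turning $\int k\,ds=\int\rho^2\th_s\,ds$ is less than $\pi$, whereas your hypothesis bounds only the polar-angle variation $\int\th_s\,ds\le\pi$. These quantities can differ substantially on a shrinking curve when $\rho$ is far from $1$. You recognise the issue, but the proposed analytic fix (analysing zeros of $\rho(\th)\cos(\th-\th_0)-d$ via the ODE) is only a sketch and it is not clear which property of the shrinker equation would control the number of sign changes. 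For the second assertion your intuition matches the paper's, but the write-up is garbled: the triangle $T$ with vertices $O,\g(\underline s),\g(\overline s)$ lies entirely on the $O$-side of $L$, so ``the region of $T$ on the far side of $L$ from $O$'' is just the chord itself; what actually makes the argument work is the convexity of the pie-slice $\mathcal T\supset T$, which is the paper's one-line observation.
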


\begin{proof}
By the assumption $\th'>0$, we have $k>0$ and the arc $\g(S)$ is contained in the cone $\mathcal C:=\{ \th(\underline{s}) \leq \th \leq \th(\overline{s}) \}$, which is convex by the assumption $0 < \Delta \leq \pi$. Since the curvature is positive, the closed set $\mathcal T$ delimited by the arc $\g(S)$ and by the two line segments joining the origin with $\g(\underline{s})$ and $\g(\overline{s})$ is a convex subset of $\R^2$, hence, the line segment joining $\g(\underline{s})$ with $\g(\overline{s})$ is contained in $\mathcal T$, which implies the thesis.
\end{proof}

Coming back to our $\Theta$--shrinker, because of its topological structure, one of the curves is contained in the region delimited by the other two, moreover the curvature of both these two ``external'' curves is always non zero, otherwise any such curve is a segment of a straight line passing for the origin, then the $120$ degrees condition at its endpoints would imply that it must be contained in the region bounded by the other two curves, hence it could not be ``external''. Notice that, on the contrary, the ``inner'' curve could actually  be a segment for the origin.

We call $\g_2$ the ``inner'' curve and, recalling that the origin of $\R^2$ is not over the straight line through the two triple junctions $A$ and $B$, parametrizing counterclockwise the three curves, that is $\th'_i>0$ (in the case that the ``inner'' curve $\g_2$ is not a segment), we call $\g_1$ the ``external'' curve which starts at $B$. By Lemma~\ref{lemma:no giro}, $\g_1$ reaches the point $A$ after $\th_1$ changes of an angle $\Delta= \int_{\underline{s}_1}^{\overline{s}_1} \th_1'(s) \, ds<2\pi$ equal to the angle $\widehat{BOA}$, which is smaller or equal than $\pi$. Hence, by Lemma~\ref{lemma:ut}, all such curve $\g_1$ stays over the straight line passing for  the two triple junctions $A$ and $B$.

We call $\g_3$ the other extremal curve, hence since $\th_1,\th_3>0$, we have 
\[
\g_1(\underline{s}_1) = \g_3(\overline{s}_3) = B, \quad 
\g_1(\overline{s}_1) = \g_3(\underline{s}_3) = A. 
\]

Because of the shrinker equation~\eqref{1102.1-bis}, all the three curves are convex with respect to the origin. This implies that the origin is contained in the interior of the bounded area $A_{13}$ enclosed by 
$\g_1$ and $\g_3$ (if the origin belongs to $\g_1$ or $\g_3$ such curve is a segment and cannot be ``external'', as we said before), which also contains $\g_2 \subset A_{13}$. We let $A_{12}$ be the region enclosed by the curves $\g_1$ and $\g_2$ and we split the analysis into two cases.

\bigskip

\emph{Case 1. The origin does not belong to the interior of $A_{12}$.}

\smallskip

Since the curve $\g_2$ is convex with respect to the origin, by the
same argument used above for $\g_1$, it is contained in the upper
half--plane determined by the straight line for the points $A$ and
$B$.

By the $120$ degrees condition it follows that the angle $\b$ at $B$ formed by the vector $(1,0)$ and $\g_1'$ is at most $\frac{\pi}{3}$. 
Similarly, also the angle $\a$ at $A$ formed by the vector $(1,0)$ and $\g_1'$ is at most $\frac{\pi}{3}$.
By the convexity of the region delimited by $\g_2$ and $\g_3$ containing the origin and again the $120$ degrees condition at $B$, 
it is then easy to see that the angle at $B$ formed by the vectors $\g_1$ and $\g_1'$ is less or equal than $\frac{\pi}{3}$ and analogously, the angle at $A$ formed by 
$\g_1$ and $\g_1'$ is greater or equal than $\frac{2\pi}{3}$.

Hence, by equality~\eqref{1602.1}, it follows
\[
\rho_1'( \underline{s}_1 ) \geq \frac12 > 0, \quad 
\rho_1'( \overline{s}_1 ) \leq - \frac12 < 0.
\]
As a consequence, there is a point of maximum radius $s_1^* \in (\underline{s}_1, \overline{s}_1 )$ 
such that $\rho_1(s_1^*) \geq \rho_1(s)$ for all $s\in
(\underline{s}_1, \overline{s}_1)$.

The vector $\g_1(s_1^*)$ forms an angle $\sigma\geq \frac{\pi}{2}$
with $(1,0)$ or $(-1,0)$. Assume that the angle between $\g_1(s_1^*)$
and $(1,0)$ is greater or equal than $\frac{\pi}{2}$ (the other case
is analogous, switching $A$ and $B$). We extend the curve $\g_1$
(still parametrized by arclength) ``before'' the point $B$ till it
intersects the $x$--axis at some $\widetilde s_1 \leq \underline{s}_1$
(this must happen because $\th_1(s)>0$ everywhere also on the extended
curve) and we consider the (non relabeled) curve $\g_1$ defined in the
interval $L_1= [\widetilde s_1, s_1^*]$. Calling $\b_0$ the angle
formed by the vectors $\g_1'(\widetilde s_1)$ and $(1,0)$, by
convexity and the fact that the angle $\b$ at $B$ formed by the vector
$(1,0)$ and $\g_1'$ is at most $\frac{\pi}{3}$, we have that $\b_0
\leq \b \leq \frac{\pi}{3}$. 
Hence, by equality~\eqref{1602.1}, we have $\rho_1'(\widetilde{s}_1)
\geq \frac12 > 0$.

Considering now the function $s \mapsto \rho_1(s)$ on the interval
$L_1= [\widetilde s_1, s_1^*]$, since $\rho_1'(\widetilde{s}_1) > 0$
and $s_1^*$ is a maximum point for $\rho_1$, either $\rho_1$ is
increasing on $L_1$, or $\rho_1$ has another maximum and then a
minimum in the interior of $L_1$ (notice that the map $\rho_1$ cannot
be constant on an interval, otherwise $\g_1$ would be an arc of a
circle centered at the origin, which is impossible since $\rho_1$ is not constant). But we know from formula~\eqref{Delta theta}
and Proposition~\ref{prop:I} that the angle $\th_1$ must increase more
than $\frac{\pi}{2}$ to go from a minimum to a maximum or viceversa
(we can apply such proposition since $\g_1$ is not an arc of a
circle). Since 
\[
\int_{\widetilde s_1}^{s^*_1} \th_1'(s) \, ds \leq \pi,
\]
there cannot be a maximum, then a minimum, then a second maximum in $L_1$. 
It follows that $\rho_1$ is increasing in such interval.\\
This, combined with the fact that $\b_0 \leq \frac{\pi}{3}$ and that the angle $\sigma$ is at least $\frac{\pi}{2}$, that is, $\int_{\widetilde s_1}^{s_1^*} \th_1'(s)\,ds \geq \frac{\pi}{2}$, is in contradiction with Lemma~\ref{lemma:zucca}. Therefore, this case cannot happen.

\bigskip

\emph{Case 2. The origin belongs to the interior of $A_{12}$.}

\smallskip
  
Being the region $A_{12}$ convex (by the shrinker equation~\eqref{1102.1-bis}, since it contains the origin), the curve $\g_2$ (which is oriented counterclockwise) goes from $A$ to $B$.
The fact that $\g_2'$ and $\g_3'$ form angles of $\frac{2\pi}{3}$ at the points $A$ and $B$ implies that: 

$(i)$ the angle in $A$ formed by the vectors $\g_3(\underline{s}_3)$ and $\g_3'(\underline{s}_3)$ 
and the angle in $B$ formed by the vectors $\g_2(\overline{s}_2)$ and $\g_2'(\overline{s}_2)$ are both less or equal than $\frac{\pi}{3}$; 

$(ii)$
the angle in $B$ formed by the vectors $\g_3(\overline{s}_3)$ and $\g_3'(\overline{s}_3)$ 
and the angle in $A$ formed by the vectors $\g_2(\underline{s}_2)$ and $\g_2'(\underline{s}_2)$ 
are both greater or equal than $\frac{2\pi}{3}$. 

In particular, by equality~\eqref{1602.1}, it follows 
\begin{equation} \label{1902.1}
\rho_2'(\underline{s}_2) \leq -\frac12 < 0, \quad 
\rho_2'(\overline{s}_2) \geq \frac12 > 0, \quad 
\rho_3'(\underline{s}_3) \geq \frac12 > 0, \quad 
\rho_3'(\overline{s}_3) \leq - \frac12 < 0. 
\end{equation}
Hence, the function $s \mapsto \rho_3(s)$ has a maximum 
at some point $s^*_3\in(\underline{s}_3, \overline{s}_3 )$, 
while the function $s \mapsto \rho_2(s)$ has a minimum 
at some point $s^\circ_2 \in (\underline{s}_2, \overline{s}_2)$. 

If $s^*_3$ is the only point of maximum of $\rho_3$ in the interval $[\underline{s}_3, \overline{s}_3 ]$, then the function $\rho_3$ is strictly monotone on each of the two subintervals $[\underline{s}_3, s^*_3 ]$ and $[s^*_3, \overline{s}_3 ]$, moreover,
 \[
\int_{\underline{s}_3}^{s^*_3}\th_3'(s)\, ds +  \int_{s^*_3}^{\overline{s}_3}\th_3'(s)\, ds = \int_{\underline{s}_3}^{\overline{s}_3}\th_3'(s)\, ds \geq \pi,
\]
since the origin is ``below'' the segment $\overline{AB}$. Thus, at least one of the two integrals on the left--hand side is greater or equal than $\frac{\pi}{2}$ 
and, by Lemma~\ref{lemma:zucca}, this is not possible. 
As a consequence, there must be another point of maximum radius
$s^{**}_3 \in (\underline{s}_3, \overline{s}_3 )$ (notice that the
maximum points cannot be an interval, otherwise $\g_3$ would be an arc
of a circle centered at the origin, hence with $\rho_3'=0$, against
relations~\eqref{1902.1}). Hence, between these two points of maximum
radius there is a minimum point $s^\circ_3$. Without loss of
generality, we assume that  $\underline{s}_3 < s^*_3 < s^\circ_3 <
s^{**}_3 < \overline{s}_3$. 

We observe that there cannot be a third maximum point for $\rho_3$
(hence also another minimum point) in the interval $[\underline{s}_3,
\overline{s}_3 ]$ because, by Lemma~\ref{prop:mI}, each of the four angles at the origin formed
by the segment connecting the origin with two consecutive of the five extremal
points for $\rho_3$ on $\g_3$ is greater than $\frac{\pi}{2}$ and, by
Lemma~\ref{lemma:no giro}, there holds
$\int_{\underline{s}_3}^{\overline{s}_3} \th_3'(s) \, ds <
2\pi$. Moreover, also the case of two minimum points and two maximum
points for $\rho_3$ in the interval $[\underline{s}_3, \overline{s}_3
]$ is not possible, because of the sign of the derivative $\rho_3'$ at the endpoints in
relations~\eqref{1902.1}. Hence, we conclude that $s^*_3, s^\circ_3,
s^{**}_3$ are the only extremal points for $\rho_3$ in the interval
$[\underline{s}_3, \overline{s}_3 ]$.

Now consider the quantities $\mF_2,\mF_3$ of the curves $\g_2,\g_3$, respectively, given by formula~\eqref{energy}. 
By relations~\eqref{1902.1}, the curves $\g_2$ and $\g_3$ are not the
unit circle (they would have $\rho_2'$ or $\rho_3'$ equal to zero everywhere), therefore $\mF_2, \mF_3 > \frac12$.
If we draw the line from the origin to $\g_3(s^\circ_3)$, this must
intersect $\g_2$ in an intermediate point, implying that the minimal
radius of the curve $\g_2$ is smaller than the minimal radius of the
curve $\g_3$. By the discussion about the value of the quantity $\mF$ in relation with the extremal values of $\rho$ at the end of Section~\ref{sec:Cp}, we have $\mF_2 >
\mF_3$. Then, if a maximum of $\rho_2$ is taken in the interior of $\g_2$, it must be larger than the maximal radius of $\g_3$ (which is taken in the interior of $\g_3$), which is not possible as $\g_2$ is contained in the region bounded by $\g_3$ and the segment $\overline{AB}$. From
this argument we conclude that there are no points of maximal radius in the interior of $\g_2$, thus, the only extremal point for $\rho_2$ in the interval $[{\underline{s}_2},{\overline{s}_2}]$ is the minimum point $s^\circ_2$. 

Defining the angle
\[
\alpha:= \int_{\underline{s}_2}^{\overline{s}_2}\th_2'(s)\, ds = \int_{\underline{s}_3}^{\overline{s}_3}\th_3'(s)\, ds,
\]
by formula~\eqref{mImF} and the symmetry of the curve $\g_3$ with respect to the straight line through the origin and the point $\g_3(s_3^\circ)$ of minimum distance, we have
\[
\mI(\mF_3) = \int_{s^*_3}^{s^\circ_3}\th_3'(s)\, ds 
= \int_{s^\circ_3}^{s^{**}_3}\th_3'(s)\, ds 
< \frac{\alpha}{2}
\]
while, since $\g_2$ does not contain any interior point of maximum radius,
\[
\mI(\mF_2) > \max \left\{ \int_{\underline{s}_2}^{s^\circ_2}\th_2'(s)\, ds, \; \int_{s^\circ_2}^{\overline{s}_2}\th_2'(s)\, ds \right\} 
\geq \frac{\alpha}{2}.
\]
Thus, $\mI(\mF_2) > \mI(\mF_3)$ and $\mF_2 > \mF_3$, which is in
contradiction with the monotonicity of the function ${\mathcal{I}}$
given by Proposition~\ref{prop:mI}. Hence, also this case can be
excluded.

\bigskip

Since we excluded both cases, our hypothetical $\Theta$--shrinker
cannot exist (under the validity of Lemma~\ref{lemma:zucca} which we
are going to prove in the next section).

\section{The proof of Lemma~\ref{lemma:zucca}}
\label{sec:proof of lemma zucca}

We prove Lemma~\ref{lemma:zucca} in the case of $\rho$ increasing, for $\rho$ decreasing the proof is similar and it can be deduced by changing the orientation of the curve.

Let $\g = (\rho \cos\th, \rho \sin \th)$ be a counterclockwise arclength parametrized shrinking curve, that we suppose defined for every $s\in\R$, and let $[s_0 , s_1]$ be an interval where $\rho(s)$ is increasing. 
As a first step, we prove a formula for the integral in Lemma~\ref{lemma:zucca}, using $\rho$ as a new integration variable. Let $\mu:\R^+\to\R$ be the function  
\begin{equation} \label{0402.1}
\mu(x) := \frac12\, x^2 - \log(x),
\end{equation}
so that relation~\eqref{energy} gives 
\begin{equation} \label{1902.3}
\mF = \mu(\rho(s)) - \log(\rho(s) \th_s(s)) \quad \forall s \in \R.
\end{equation}
Let $R$ be the maximum value of $\rho$ along the whole $\g$, namely $R = \rho_{\rm max}(\mF)$ in the notation at the end of Section~\ref{sec:Cp}. We consider $R > 1$, namely we exclude the case when $\g$ is the unit circle
(for the unit circle Lemma~\ref{lemma:zucca} trivially holds, as $\rho_s=0$ everywhere).
Since $\rho_s = 0$ when $\rho = R$, 
by equation~\eqref{arclength} one has $\rho \th_s = 1$ at that point, therefore, by relation~\eqref{1902.3},
we deduce 
\begin{equation} \label{1902.4}
\mF = \mu(R),
\end{equation}
which gives a link between the quantity $\mF$ and the maximum distance $R$ of the curve $\g$.
Notice that, for the same reason, $\mF = \mu(r)$ where $r = \rho_{\rm min}(\mF)$. 

By equations~\eqref{1902.3} and~\eqref{1902.4}, it follows 
\begin{equation*}
\rho \th_s = \exp(\mu(\rho) - \mu(R)),
\end{equation*}
hence,
$$
\th_s = \frac{1}{\rho} \, e^{\mu(\rho) - \mu(R)},
$$
then, since $\rho_s > 0$ in $[s_0, s_1]$, by equation~\eqref{arclength}, we conclude 
\begin{equation} \label{1902.6}
\rho_s = \sqrt{ 1 - e^{2[\mu(\rho) - \mu(R)]} }.
\end{equation}
Therefore, changing the integration variable as $\rho= \rho(s)$, we get
\begin{equation} \label{1902.7}
\int_{s_0}^{s_1} \th_s(s) \, ds = \int_{\rho(s_0)}^{\rho(s_1)} f(\rho,R) \, d\rho
\end{equation}
where
\begin{equation} \label{1902.8}
f(\rho,R) := \frac{e^{\mu(\rho) - \mu(R)}}{\rho \sqrt{ 1 - e^{2[\mu(\rho) - \mu(R)]} } } .
\end{equation}
The function $f(\rho,R)$ is defined for $R > 1$ and $\rho \in (r,R)$, where $r = \rho_{\min}(\mu(R))\in(0,1)$. 

\medskip

Let $s_{\min} < s_{\max}$ be two points such that 
\[
\rho_s(s_{\min}) = 0, \quad 
\rho_s(s_{\max}) = 0, \quad 
\rho_s(s) > 0 \quad \forall s \in (s_{\min}, s_{\max}),
\]
so that $\rho(s_{\min}) = r = \rho_{\min}(\mF)$ and 
$\rho(s_{\max}) = R = \rho_{\max}(\mF)$. 
By equation~\eqref{arclength}, $\rho_s(s)$ is increasing when $\rho(s) \th_s(s)$ is decreasing; being the quantity $\mF$ constant along the curve $\g$, by equation~\eqref{1902.3}, this happens when $\mu(\rho(s))$ is decreasing, 
namely when $\mu'(\rho(s)) < 0$, that is, for $\rho(s) < 1$ (the function $\mu$ is strictly convex with a minimum at $x=1$). 
Therefore, $\rho_s$ is increasing on $[s_{\min}, \s]$ and decreasing on $[\s, s_{\max}]$, 
where $\s \in (s_{\min}, s_{\max})$ is the only point such that $\rho(\s) = 1$. 

We analyze now the points where $\rho_s \geq \frac12$. 
First we observe that if the quantity $\mF$ is too low, then there are
no such points. Indeed, since $\rho_s$ has its maximum when $\rho =
1$, namely at $s = \s$, by equation~\eqref{1902.6}, we have 
\[
\rho_s(\s) = \sqrt{1 - e^{1 - 2 \mu(R)}}= \sqrt{1 - e^{1 - 2 \mF}},
\]
as $\mu(\rho(\sigma))=\mu(1)=1/2$.\\
It follows that $\rho_s(\s) \geq \frac12$ if and only if
$$
\mF\geq \frac12\, - \log \Big( \frac{\sqrt{3}}{2} \Big),
$$
that is, if and only if $R \geq \overline R$, where 
$\overline R$ is the unique real number such that 
\begin{equation} \label{0402.2}
\mu(\overline R) = \frac12\, - \log \Big( \frac{\sqrt{3}}{2} \Big), \quad \overline R > 1.
\end{equation}
As a consequence, Lemma~\ref{lemma:zucca} trivially holds for curves $\g$ such that $R < \overline R$
(because the assumption $\rho_s(s_0) \geq \frac12$ is simply not satisfied).

Let $R \geq \overline R$. Let $\widehat{s}$ be the smallest number in the interval $(s_{\min}, s_{\max})$ such that $\rho_s(\widehat{s}) \geq \frac12$, namely 
$\widehat{s}$ is the unique number in $(s_{\min}, \s]$ such that $\rho_s(\widehat{s}) = \frac12$, then, clearly 
\begin{equation} \label{1902.8bis}
\int_{s_0}^{s_1} \th_s(s) \, ds \leq \int_{\widehat{s}}^{s_{\max}} \th_s(s) \, ds.
\end{equation}
Letting $d(R)=\rho(\widehat{s})$, by equation~\eqref{1902.6}, $d(R)$ is the unique solution of 
\begin{equation} \label{0402.3}
\mu(d(R)) = \mu(R) + \log \Big( \frac{\sqrt{3}}{2} \Big), \quad 0 < d(R) \leq 1,
\end{equation}
for any $R \geq \overline R$. Notice that $d(\overline R) = 1$ and $0 < d(R) < 1$ for $R > \overline R$. 
Finally, by equations~\eqref{1902.7} and~\eqref{1902.8bis}, we have 
\begin{equation*}
\int_{s_0}^{s_1} \th_s(s) \, ds 
\leq \int_{\widehat{s}}^{s_{\max}} \th_s(s) \, ds=\int_{d(R)}^R f(x,R) \, dx =: J(R).
\end{equation*}
Lemma~\ref{lemma:zucca} is then proved once we get the following bound.
\begin{prop} \label{prop:int}
For all $R \geq \overline R$ there holds 
\begin{equation*}
J(R) < \frac{\pi}{2}.
\end{equation*}
\end{prop}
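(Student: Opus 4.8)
The plan is to prove Proposition~\ref{prop:int} by combining an analytic asymptotic analysis of $J(R)$ as $R\to+\infty$ with a rigorous computer‑assisted estimate on a bounded range of the parameter. As a preliminary reduction I would split $J(R)$ according to the sign of $\mu'$, writing $J(R)=A(R)+B(R)$ with $A(R)=\int_{d(R)}^{1}f(x,R)\,dx$ (the portion with $x\le 1$) and $B(R)=\int_{1}^{R}f(x,R)\,dx$ (the portion with $x\ge 1$), and then change the integration variable on each piece so as to relocate the integrable (inverse‑square‑root) singularities of the integrand. On $A(R)$ the natural substitution is $u=x\,\th_s=e^{\mu(x)-\mu(R)}$ (equivalently $u=\sin\varphi$, which turns $\frac{dx}{\sqrt{1-u^2}}$ into $d\varphi$): using $x\mu'(x)=x^2-1$ this gives $A(R)=\int_{u_{\min}}^{\sqrt3/2}\frac{du}{(1-\rho_-(u)^2)\sqrt{1-u^2}}$, where $u_{\min}=e^{1/2-\mu(R)}$ and $\rho_-(u)\in(0,1]$ is the branch of $\mu(\rho_-)=\mu(R)+\log u$ below $1$, and the only singularity is the integrable one at $u=u_{\min}$, where $\rho_-\to 1$. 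On $B(R)$ I would instead use $y=\rho_s=\sqrt{1-e^{2[\mu(x)-\mu(R)]}}$, obtaining $B(R)=\int_{0}^{\sqrt{1-u_{\min}^2}}\frac{dy}{(\rho_+(y)^2-1)\sqrt{1-y^2}}$, with $\rho_+(y)\ge 1$ the branch of $\mu(\rho_+)=\mu(R)+\tfrac12\log(1-y^2)$ above $1$; now the endpoint $x=R$ (i.e.\ $y=0$) is regular, and the only singularity is again the integrable one where $\rho_+\to 1$.

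The asymptotic step is to show, with explicit error terms, that $\lim_{R\to+\infty}J(R)=\arcsin(\sqrt3/2)=\pi/3<\pi/2$, thereby reducing the statement to a compact parameter interval $[\overline R,R^*]$ with $R^*$ explicit. The key observation is that $\rho_-(u)=\frac{u_{\min}}{u}e^{(\rho_-^2-1)/2}<\frac{u_{\min}}{u}$, so for $u$ bounded away from $u_{\min}$ one has $\frac{1}{1-\rho_-(u)^2}\to 1$ with excess $O(u_{\min}^2/u^2)$, while the contribution of the region near $u=u_{\min}$ (where $\rho_-\to1$, and $\mu(\rho_-)-\tfrac12=\log(u/u_{\min})$ controls $1-\rho_-$) is $O(u_{\min})$; hence $A(R)\le\arcsin(\sqrt3/2)+Cu_{\min}=\tfrac{\pi}{3}+Cu_{\min}$ for an explicit $C$. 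Similarly, for $y$ not too close to $\sqrt{1-u_{\min}^2}$ one bounds $\rho_+(y)$ from below by a quantity growing with $R$, so $\frac{1}{\rho_+(y)^2-1}$ is small (of order $R^{-2}$), while the contribution near the singular endpoint is $O(u_{\min})$ times a polynomial in $R$; hence $B(R)\to 0$ with an explicit rate. Combining, $J(R)\le\tfrac{\pi}{3}+(\text{explicitly vanishing term})(R)<\tfrac{\pi}{2}$ for all $R\ge R^*$.

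It remains to verify $J(R)<\pi/2$ for $R\in[\overline R,R^*]$, and this is where I expect the real work to be. I would cover $[\overline R,R^*]$ by finitely many short subintervals $[R_j,R_{j+1}]$ and, on each, bound $\sup_{R\in[R_j,R_{j+1}]}J(R)$ rigorously by interval arithmetic: subdividing the range of the integration variable into finitely many cells, enclosing the integrand on each cell $[x_k,x_{k+1}]\times[R_j,R_{j+1}]$ using only the elementary operations ($+$, $\cdot$, $1/x$, $x^2$, $\sqrt{x}$, $\exp$, $\log$, $\arcsin$) whose correctly‑rounded enclosures are guaranteed by IEEE~1788, and — crucially — replacing the integrand on the few cells adjacent to the integrable singularities (near $x=R$, near $x=1$ and near $x=d(R)$) by an explicit majorant of the form $\mathrm{const}/\sqrt{\,\cdot\,}$ with the constant rigorously enclosed, so that those cells are integrated in closed form rather than by a Riemann sum (which would diverge there). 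Summing these enclosures yields a verified upper bound $<\pi/2$ on each subinterval, hence on all of $[\overline R,R^*]$. The main obstacle is twofold: producing clean enough explicit constants in the $R\to+\infty$ analysis that $R^*$ stays small, and handling the parameter‑dependent singular endpoints robustly; since $\sup_R J(R)$ lies only slightly below $\pi/2$ (numerically around $1.5$), both the subdivision and the singularity estimates must be quite tight.
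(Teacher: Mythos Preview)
Your plan is structurally the same as the paper's: an explicit large-$R$ bound reducing to a compact parameter range, then rigorous interval arithmetic on that range, with the integrable endpoint singularity handled in closed form. The differences are in the details.

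For large $R$ the paper does not compute the sharp limit: it splits $[d(R),R]$ at points $\rho_1<1<\rho_2=R-1$, bounds the four pieces by elementary majorants (using in particular $\psi(e^x)<1/\sqrt{2|x|}$), and obtains $J(R)<1.4$ for all $R\ge4$. Your observation that $J(R)\to\arcsin(\sqrt3/2)=\pi/3$ is correct and sharper, but extracting explicit error constants good enough to yield a small $R^*$ is more work than the paper's cruder route buys.

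For the computer-assisted range the paper exploits an ingredient you do not mention: $\rho\mapsto f(\rho,R)$ is strictly convex on $[d(R),R)$ (Lemma~\ref{lemma:convex f}). This makes the trapezoid rule an honest \emph{upper} bound on each subinterval, which is substantially tighter than simply enclosing the integrand on a cell and summing. The paper also treats $R\in[\overline R,\tfrac{23}{16}]$ separately (there $d(R)$ is close to $1$ and the generic subdivision is awkward); you may need an analogous special case.

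Two smaller points. First, the only singularity of $f(\cdot,R)$ on $[d(R),R)$ is at $\rho=R$; there is none at $\rho=1$ or $\rho=d(R)$ --- the singularity you see at $u=u_{\min}$ is an artifact of your substitution, since $\mu'(1)=0$. Second, the lower endpoint $d(R)$ is only defined implicitly, so your interval-arithmetic step needs a rigorous enclosure of it for each parameter subinterval; the paper provides this via a monotone iteration (Lemma~\ref{lemma:approx d(R)}).
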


\subsection{Proof of Proposition~\ref{prop:int} -- Preliminaries}\  

It is easy to see by the definition of $d(R)$ in formula~\eqref{0402.3} that $R \mapsto d(R)$ is a strictly decreasing function on $[\overline R, +\infty)$ with $d(\overline R)=1$. We let
\begin{equation} \label{0402.5}
\psi(t) := \frac{t}{\sqrt{1 - t^2}} , \quad t \in (0,1),
\end{equation}
then, the function $f$, defined by formula~\eqref{1902.8}, can be expressed as
\begin{equation*}
f(\rho,R) = \frac{e^{\frac12(\rho^2 - R^2)} R}{\rho^2 \sqrt{ 1 - e^{\rho^2 - R^2} R^2 \rho^{-2} }} \,
= \frac{e^{\mu(\rho) - \mu(R)}}{\rho \sqrt{1 - e^{2[\mu(\rho) - \mu(R)]}}} \,
= \frac{1}{\rho} \, \psi\big( \exp\{\mu(\rho) - \mu(R)\} \big),
\end{equation*}
for $\rho\in [d(R), R)$. Indeed, $\mu(\rho) - \mu(R) < 0$ for all
$\rho\in [d(R),R)$.

Notice that, even if $f(\rho,R) \to + \infty$ as $\rho\to R^-$, the
integral $J(R)$ is finite for all $R \geq \overline R$, because $f$
diverges like $(R-\rho)^{-1/2}$ as $\rho \to R^-$.

\begin{lemma}[Approximation of $\overline R$] 
\label{lemma:approx bar R}
Recalling the defining formula~\eqref{0402.2} for $\overline R$, we have
\begin{equation} \label{0402.8} 
\sqrt{1 + \log(c_n)} < \overline R < \sqrt{1 + \log(d_n)}  \quad \forall n = 0,1,2,\ldots
\end{equation}
where $(c_n)$ is the increasing sequence defined by
\begin{equation} \label{0402.9} 
c_0 = 1, \quad c_{n+1} = \frac43 \, (1 + \log(c_n)), \quad n \geq 0
\end{equation}
and $(d_n)$ is the decreasing sequence defined by
\begin{equation} \label{2802.1} 
d_0 = 3, \quad d_{n+1} = \frac43 \, (1 + \log(d_n)), \quad n \geq 0.
\end{equation}
\end{lemma}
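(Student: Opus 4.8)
The plan is to recast the transcendental equation~\eqref{0402.2} defining $\overline R$ as a fixed--point relation for precisely the map that generates the sequences $(c_n)$ and $(d_n)$. Writing $\mu$ as in~\eqref{0402.1}, equation~\eqref{0402.2} reads $\frac12 \overline R^2 - \log \overline R = \frac12 - \log(\sqrt3/2)$; multiplying by $2$ and using $2\log(\sqrt3/2) = \log(3/4)$, this becomes $\overline R^2 = 1 + \log\big(\tfrac43 \overline R^2\big)$. Hence, setting $c^\ast := \tfrac43 \overline R^2$ and $T(c) := \tfrac43(1 + \log c)$ for $c>0$, the number $c^\ast$ is a fixed point of $T$ and $\overline R = \sqrt{1 + \log c^\ast}$, with $c^\ast > \tfrac43 > 1$ because $\overline R > 1$. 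Since $c_{n+1} = T(c_n)$ and $d_{n+1} = T(d_n)$ by~\eqref{0402.9}--\eqref{2802.1}, and $t \mapsto \sqrt{1 + \log t}$ is increasing on $[1,\infty)$, the estimate~\eqref{0402.8} is equivalent to
\[
c_n < c^\ast < d_n \qquad \text{for every } n \geq 0,
\]
i.e. to the statement that the monotone iteration $T^n(1)$ stays below the fixed point while $T^n(3)$ stays above it.

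The key preliminary step is a sign analysis of $h(c) := T(c) - c = \tfrac43 + \tfrac43 \log c - c$ near $c^\ast$. The map $T$ is strictly increasing on $(0,\infty)$ since $T' = \tfrac{4}{3c} > 0$, and $h$ is strictly concave ($h'' = -\tfrac{4}{3c^2} < 0$), increasing on $(0,\tfrac43)$, decreasing on $(\tfrac43,\infty)$, with $h \to -\infty$ at both ends of $(0,\infty)$ and $h(\tfrac43) = \tfrac43 \log\tfrac43 > 0$. Hence $h$ has exactly two zeros, one in $(0,\tfrac43)$ and one in $(\tfrac43,\infty)$, and $h > 0$ precisely between them. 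Since $c^\ast$ is a zero of $h$ lying in $(\tfrac43,\infty)$ it is the larger one, and since $h(1) = \tfrac13 > 0$ the point $1$ lies strictly between the two zeros; therefore $h > 0$ on $[1,c^\ast)$ and $h < 0$ on $(c^\ast,+\infty)$.

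With this in hand each bound follows by a short induction. For the lower bound I would prove $1 \leq c_n < c^\ast$ for all $n$: this holds for $n=0$, and if $1 \leq c_n < c^\ast$ then monotonicity of $T$ gives $\tfrac43 = T(1) \leq T(c_n) = c_{n+1} < T(c^\ast) = c^\ast$, while $c_{n+1} - c_n = h(c_n) > 0$ shows that the sequence increases and stays below $c^\ast$. For the upper bound I would prove $c^\ast < d_n \leq 3$ for all $n$, the inductive step being symmetric: $d_{n+1} = T(d_n) > T(c^\ast) = c^\ast$ and $d_{n+1} - d_n = h(d_n) < 0$. The only non--formal point is the base case $d_0 = 3 > c^\ast$, i.e. $\overline R < \tfrac32$: since $\mu$ is increasing on $[1,\infty)$ and $\mu(\overline R) = \tfrac12 - \log(\sqrt3/2)$, this is the inequality $\mu(\tfrac32) > \tfrac12 - \log(\sqrt3/2)$, which simplifies to $\tfrac58 > \tfrac12 \log 3$, i.e. $e^{5/4} > 3$ — an elementary numerical fact (e.g. $e^5 > (2.7)^5 > 81 = 3^4$), the single place where a crude estimate is needed.

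I do not expect a genuine obstacle: the statement is in essence the convergence of a monotone, contraction--like iteration, and once the defining equation of $\overline R$ has been rewritten as $c^\ast = T(c^\ast)$ the rest is bookkeeping. The most delicate point is the concavity argument for $h$: one must make sure that $c^\ast$ is the \emph{larger} of the two zeros of $T(c)-c$ (this is exactly what $\overline R > 1$ guarantees), so that the iterations $(c_n)$ and $(d_n)$ are genuinely confined to the correct side of $c^\ast$; combined with the trivial check $e^{5/4} > 3$, which ensures that the starting value $d_0 = 3$ really overshoots $c^\ast$, this is all the argument rests on.
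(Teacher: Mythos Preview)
Your proof is correct and follows essentially the same approach as the paper: both rewrite the defining equation as the fixed--point relation $\overline R^2 = 1 + \log(\tfrac43 \overline R^2)$, verify the numerical base case via $\tfrac54 > \log 3$, and then induct using the monotonicity of $T(c) = \tfrac43(1+\log c)$. The paper's argument is slightly leaner --- it obtains the monotonicity of $(c_n)$ and $(d_n)$ by observing that $c_{n+1}-c_n$ has the same sign as $c_n-c_{n-1}$ (because $T$ is increasing) and checking the first step, rather than via your concavity analysis of $h = T - \mathrm{id}$ --- but the substance is identical.
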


\begin{proof} 
By definition~\eqref{0402.2}, $\overline R > 1$ and the first inequality in~\eqref{0402.8} for $n=0$ holds.
Since $\frac54 > \log(3)$, one directly proves that 
$\mu(\sqrt{1 + \log(3)}) > \frac12 - \log(\frac{\sqrt{3}}{2}) = \mu(\overline R)$, 
where $\mu$ is defined by formula~\eqref{0402.1}. 
Since $\mu$ is strictly increasing on $(1,+\infty)$, it follows that $\overline R < \sqrt{1 + \log(3)}$, 
which is the second inequality in~\eqref{0402.8} for $n=0$. 

Assume that $\sqrt{1 + \log(c_n)} < \overline R < \sqrt{1 + \log(d_n)}$ for some $n \geq 0$. 
Then 
\[
1 + \log\Big( \frac43 \big( 1 + \log(c_n) \big) \Big) 
< 1 + \log\Big( \frac43 \overline R^2 \Big)
< 1 + \log\Big( \frac43 \big( 1 + \log(d_n) \big) \Big).
\]
By definition of $\overline R$ one has $1 + \log(\frac43 \overline R^2) = \overline R^2$, hence, by equations~\eqref{0402.9} and \eqref{2802.1}, we get 
$1 + \log(c_{n+1}) < \overline R^2 < 1 + \log(d_{n+1})$. Thus, formula~\eqref{0402.8} holds for all $n \geq 0$. 

For all $n \geq 1$ one has $c_{n+1} > c_n$ if and only if $c_n > c_{n-1}$ 
and similarly $d_{n+1} < d_n$ if and only if $d_n < d_{n-1}$.  
Since $c_1 > c_0$ and $d_1 < d_0$, 
the sequence $(c_n)$ is increasing and $(d_n)$ is decreasing.
\end{proof}

\begin{lemma}[Exact rounding for the value of $\overline R$]
\label{lemma:bar R value}
There holds $\frac{21}{16} < \frac75 < \overline R < \frac{23}{16}$.
\end{lemma}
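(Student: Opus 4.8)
The plan is to reduce each of the three inequalities either to an elementary comparison or to a purely numerical inequality between explicit rationals and a value of $\exp$ (or $\log$), the latter to be certified by interval arithmetic exactly as set up in the Introduction (both $\exp$ and $\log$ are among the allowed elementary functions there). First, $\frac{21}{16}<\frac75$ is immediate, since $21\cdot 5=105<112=7\cdot 16$. For the remaining two inequalities I would use that $\overline R$, $\frac75$ and $\frac{23}{16}$ all lie in $(1,+\infty)$, where the function $\mu(x)=\frac12 x^2-\log x$ of~\eqref{0402.1} is strictly increasing; hence $\frac75<\overline R<\frac{23}{16}$ is equivalent to $\mu(\tfrac75)<\mu(\overline R)<\mu(\tfrac{23}{16})$, and by the defining relation~\eqref{0402.2} one has $\mu(\overline R)=\tfrac12-\log\tfrac{\sqrt3}{2}$.

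Expanding the outer terms, $\mu(\tfrac75)=\tfrac{49}{50}-\log\tfrac75$ and $\mu(\tfrac{23}{16})=\tfrac{529}{512}-\log\tfrac{23}{16}$, and inserting $\mu(\overline R)=\tfrac12-\log\tfrac{\sqrt3}{2}$, the inequality $\mu(\tfrac75)<\mu(\overline R)$ rearranges to $\log\frac{14}{5\sqrt3}>\frac{12}{25}$, that is, after squaring the positive argument, to $\frac{196}{75}>e^{24/25}$; likewise $\mu(\overline R)<\mu(\tfrac{23}{16})$ rearranges to $\log\frac{23}{8\sqrt3}<\frac{273}{512}$, that is, to $\frac{529}{192}<e^{273/256}$. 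Equivalently, it suffices to check the two purely numerical inequalities $\log\frac{196}{75}>\frac{24}{25}$ and $\log\frac{529}{192}<\frac{273}{256}$; I would verify these with the interval-arithmetic routines, obtaining enclosures of $\log\frac{196}{75}$ and $\log\frac{529}{192}$ that are strictly above $\frac{24}{25}$ and strictly below $\frac{273}{256}$ respectively, which completes the proof.

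An alternative that recycles the machinery just built is to apply Lemma~\ref{lemma:approx bar R}: one iterates the recursion for $c_n$ until $1+\log c_n>\frac{49}{25}$, so that $\overline R>\sqrt{1+\log c_n}>\frac75$, and the recursion for $d_n$ until $1+\log d_n<\frac{529}{256}$, so that $\overline R<\sqrt{1+\log d_n}<\frac{23}{16}$; both are finite interval-arithmetic computations, and for the upper bound already $d_1=\frac43(1+\log 3)$ does the job with ample room. I would still prefer the $\mu$-comparison, because the sequence $c_n$ approaches its limit slowly and from below, so this route needs about ten iterations to clear the lower bound.

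The only delicate point is the lower bound $\frac75<\overline R$: numerically $\overline R\approx 1.4004$, so $\mu(\tfrac75)$ and $\mu(\overline R)$ are very close and the slack in $\frac{196}{75}>e^{24/25}$ is only of order $10^{-3}$ (namely $\frac{196}{75}=2.6133\ldots$ against $e^{24/25}\approx 2.6117$). Thus one must ensure the interval enclosures are tight enough to separate the two quantities --- a minor but genuine constraint, comfortably within the precision of the package. The upper bound $\overline R<\frac{23}{16}$, in contrast, has a wide margin ($\frac{529}{192}\approx 2.755$ versus $e^{273/256}\approx 2.905$) and presents no difficulty.
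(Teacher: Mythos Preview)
Your proposal is correct. Your primary route --- comparing $\mu(\tfrac75)$ and $\mu(\tfrac{23}{16})$ directly with the known value $\mu(\overline R)=\tfrac12-\log\tfrac{\sqrt3}{2}$ via the monotonicity of $\mu$ on $(1,+\infty)$ --- is a genuinely different and more elementary argument than the paper's. The paper instead takes exactly your ``alternative'': it iterates the sequences $c_n,d_n$ of Lemma~\ref{lemma:approx bar R} (with $n=50$, not ten) under interval arithmetic to obtain the very tight enclosure $1.40045662664531200\ldots < \overline R < 1.40045662664531622\ldots$, from which both bounds follow at once. Your direct $\mu$-comparison trades the iteration for two single evaluations of $\log$ (or $\exp$); the only cost, which you correctly flag, is that the lower inequality $\tfrac{196}{75}>e^{24/25}$ has slack of order $10^{-3}$ and so requires a moderately tight interval enclosure, whereas the paper's fifty iterations pin $\overline R$ down to many more digits than needed. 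Both approaches ultimately rest on the same interval-arithmetic guarantees, and both are valid.
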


\begin{proof} 
Using {\em interval arithmetic}, we compute exact roundings for $c_n$, $d_n$ 
in Lemma~\ref{lemma:approx bar R} for $n=50$, and we get 
$1.4004566266453120082 < \overline R < 1.4004566266453162271$.
\end{proof}

\begin{lemma}[Approximation of $d(R)$]
\label{lemma:approx d(R)}
Let $R \geq \overline R$, recalling the defining formula~\eqref{0402.3} for $d(R)$ and denoting 
\begin{equation} \label{0402.10} 
a := - \mu(R) - \log \Big( \frac{\sqrt{3}}{2} \Big),
\end{equation}
there holds 
\begin{equation} \label{0402.11} 
\exp(a_n) < d(R) \leq \exp(b_n), \quad \forall n = 0,1,2,\ldots
\end{equation}
where $(a_n)$ is the increasing sequence defined by
\begin{equation} \label{0402.12} 
a_0 = a, \quad a_{n+1} = a + \frac12\, \exp(2 a_n), \quad n \geq 0
\end{equation}
and $(b_n)$ is defined by
\begin{equation*}
b_0 = a + \frac12 , \quad b_{n+1} = a + \frac12\, \exp(2 b_n), \quad n \geq 0,
\end{equation*}
which is a decreasing sequence if $R > \overline R$. It is constant
equal to zero if $R=\overline{R}$.
\end{lemma}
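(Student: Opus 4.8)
The plan is to turn the defining relation~\eqref{0402.3} into a fixed--point equation for a strictly increasing map and then run the standard monotone iteration argument, bracketing $\log d(R)$ from below by the sequence $(a_n)$ and from above by $(b_n)$. First I would rewrite~\eqref{0402.3}: by the definition~\eqref{0402.10} of $a$ it reads $\mu(d(R)) = -a$, i.e. $\frac12 d(R)^2 - \log d(R) = -a$, equivalently
\[
\log d(R) \,=\, a + \tfrac12\, d(R)^2 \,=\, a + \tfrac12\, e^{2\log d(R)} .
\]
Thus, setting $x^* := \log d(R)$ and $g(x) := a + \tfrac12\, e^{2x}$, the number $x^*$ satisfies $x^* = g(x^*)$; moreover $g'(x) = e^{2x} > 0$, so $g$ is strictly increasing on $\R$, and since $0 < d(R) \leq 1$ we have $x^* \leq 0$, with $x^* = 0$ precisely when $d(R) = 1$, i.e. when $R = \overline R$.

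Next I would prove $a_n < x^* \leq b_n$ for all $n$ by induction, recalling $a_{n+1} = g(a_n)$, $b_{n+1} = g(b_n)$. For $n=0$: $a_0 = a < a + \tfrac12 e^{2x^*} = x^*$ since $e^{2x^*}>0$, and $b_0 = a + \tfrac12 \geq a + \tfrac12 e^{2x^*} = x^*$ since $x^* \leq 0$. For the inductive step, monotonicity of $g$ gives $a_n < x^* \Rightarrow a_{n+1} = g(a_n) < g(x^*) = x^*$ and $b_n \geq x^* \Rightarrow b_{n+1} = g(b_n) \geq g(x^*) = x^*$. Applying the increasing function $\exp$ yields~\eqref{0402.11}. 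The monotonicity of $(a_n)$ comes out the same way: $a_1 = a + \tfrac12 e^{2a} > a = a_0$, and then $a_{n+1} = g(a_n) > g(a_{n-1}) = a_n$ by induction.

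For $(b_n)$ I would first note, using~\eqref{0402.2}, that $\mu(\overline R) = \tfrac12 - \log(\tfrac{\sqrt{3}}{2})$, so by~\eqref{0402.10} and the strict monotonicity of $\mu$ on $(1,+\infty)$ one gets $a = -\mu(R) - \log(\tfrac{\sqrt{3}}{2}) \leq -\tfrac12$ for $R \geq \overline R$, with equality iff $R = \overline R$. If $R = \overline R$ then $a = -\tfrac12$, hence $b_0 = 0$ and $b_{n+1} = a + \tfrac12 e^{0} = 0$, so $(b_n) \equiv 0$. If $R > \overline R$ then $a < -\tfrac12$, so $2a+1 < 0$ and $b_1 = a + \tfrac12 e^{2a+1} < a + \tfrac12 = b_0$, whence $b_{n+1} = g(b_n) < g(b_{n-1}) = b_n$ by induction, i.e. $(b_n)$ decreases. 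I do not anticipate a genuine obstacle here: the substitution $x = \log d(R)$ reduces the statement to a one--dimensional monotone recursion, and the only things that need care are the strict versus weak inequalities and the boundary case $R = \overline R$, where $d(R) = 1$ and the upper sequence collapses to the constant $0$.
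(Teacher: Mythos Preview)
Your argument is correct and is essentially the same as the paper's: both rewrite the defining relation as $\log d(R) = a + \tfrac12\, d(R)^2$ and run the obvious monotone induction to trap $\log d(R)$ between $a_n$ and $b_n$, then check the monotonicity of the two sequences via the sign of the first step. Your formulation via the fixed point $x^* = g(x^*)$ with $g(x) = a + \tfrac12 e^{2x}$ is just a slight repackaging of the paper's direct computation.
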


\begin{proof}
Since $d(R) > 0$, from formulas~\eqref{0402.3} and~\eqref{0402.10} one has $\log(d(R)) = a + \frac12 d(R)^2 > a$, hence $d(R) > \exp(a_0)$. 
By induction, assume that $d(R) > \exp(a_n)$ for some $n \geq 0$. 
Then $\log(d(R)) = a + \frac12 d(R)^2 > a + \frac12 \exp(2 a_n) = a_{n+1}$, 
hence $d(R) > \exp(a_{n+1})$. This proves the first inequality~\eqref{0402.11} for all $n \geq 0$. 
Since $d(R) \leq 1$, one has $\log(d(R)) = a + \frac12 d(R)^2 \leq a + \frac12$, hence $d(R) \leq \exp(b_0)$. 
Assume that $d(R) \leq \exp(b_n)$ for some $n \geq 0$. Then $\log(d(R)) = a + \frac12 d(R)^2 
\leq a + \frac12 \exp(2 b_n) = b_{n+1}$, hence $d(R) \leq \exp(b_{n+1})$. The proof of inequalities~\eqref{0402.11} is complete. 
By the definition~\eqref{0402.12}, it follows that $a_{n+1} > a_n$ if and only if $a_n > a_{n-1}$, for all $n \geq 1$. Moreover, $a_1 > a_0$, therefore $(a_n)$ is increasing. 
Similarly, $b_{n+1} - b_n$ has the same sign of $b_n - b_{n-1}$ for all $n \geq 1$. 
For $R > \overline R$ one has $b_0 < 0$, hence $b_1 < b_0$ and the sequence $(b_n)$ is decreasing. 
Notice that for $R = \overline R$ one has $b_n = b_0 = 0$ for all $n$ and $d(\overline R) = 1 = \exp(b_n)$. 
\end{proof}

To approximate the integral $J(R)$ from above, 
we will use the convexity of the function $f(\cdot,R)$.

\begin{lemma}\label{lemma:convex f}
For all $R \geq \overline R$, the function $\rho \mapsto f(\rho,R)$ is strictly convex on $[d(R),R)$. 
\end{lemma}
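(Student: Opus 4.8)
The plan is to prove the stronger statement that $\rho\mapsto f(\rho,R)$ is \emph{log-convex} on $[d(R),R)$, i.e.\ that $\rho\mapsto\log f(\rho,R)$ is convex there. Since $f>0$ and $f'' = f\big[(\log f)'' + ((\log f)')^2\big]$, the inequality $(\log f)''>0$ immediately gives $f''>0$, hence strict convexity. I will work with the representation $f(\rho,R)=\rho^{-1}\psi\big(h(\rho)\big)$, where $h(\rho):=\exp(\mu(\rho)-\mu(R))$; recall that $\mu(\rho)-\mu(R)<0$ for every $\rho\in[d(R),R)$ (already noted above), so that $h(\rho)\in(0,1)$ and $1-h(\rho)^2>0$ on the whole interval.

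The rest is a short computation. Using $\psi(t)=t(1-t^2)^{-1/2}$ one writes
\[
\log f(\rho,R) = -\log\rho + \big(\mu(\rho)-\mu(R)\big) - \tfrac12\log\big(1-h(\rho)^2\big).
\]
Differentiating in $\rho$ and using $h'=\mu'(\rho)\,h$, the two terms involving $\mu'(\rho)$ combine:
\[
(\log f)'(\rho,R) = -\frac1\rho + \mu'(\rho) + \frac{\mu'(\rho)\,h^2}{1-h^2} = -\frac1\rho + \frac{\mu'(\rho)}{1-h^2}.
\]
A second differentiation, again via $h'=\mu'(\rho)h$ and with $\mu''(\rho)=1+\rho^{-2}$, then gives
\[
(\log f)''(\rho,R) = \frac1{\rho^2} + \frac{\mu''(\rho)\,(1-h^2) + 2\,\mu'(\rho)^2\,h^2}{(1-h^2)^2}.
\]
On $[d(R),R)$ one has $\mu''(\rho)>0$ and $1-h(\rho)^2>0$, so every summand on the right is nonnegative and the first is strictly positive; hence $(\log f)''(\rho,R)>0$ there, and by the remark above $f(\cdot,R)$ is strictly convex on $[d(R),R)$.

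I do not expect a genuine obstacle. The one idea that matters is to pass to $\log f$: after that the $\mu'(\rho)$ contributions telescope into the single term $\mu'(\rho)/(1-h^2)$, whose derivative is a sum of manifestly nonnegative pieces controlled by $\mu''>0$ and $1-h^2>0$. By contrast, differentiating formula~\eqref{1902.8} head-on yields an expression that is not obviously sign-definite, which is precisely why the logarithmic reformulation is the right move. The only point needing even a sentence of justification is the strict positivity of $1-h(\rho)^2$ up to the left endpoint $\rho=d(R)$, where $h(d(R))=\tfrac{\sqrt3}{2}<1$ by the defining relation~\eqref{0402.3}.
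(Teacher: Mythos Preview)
Your proof is correct and takes a genuinely different route from the paper. The paper computes $\partial_{\rho\rho}^2 f$ directly: writing $\mathcal G=\exp(\mu(\rho)-\mu(R))$ and $\eta=\mathcal G^2$, it obtains
\[
\partial_{\rho\rho}^2 f(\rho,R)=\frac{\mathcal G}{(1-\mathcal G^2)^{5/2}\rho^3}\,h,\qquad h=\rho^4(1+2\eta)-3\rho^2(1+\eta)+(2\eta^2-5\eta+6),
\]
and then shows $h>0$ by completing the square in $\rho^2$ and checking that the residual factor $15+25\eta-16\eta^2$ is positive for $\eta\in(0,1)$. Your argument instead establishes the stronger conclusion that $f(\cdot,R)$ is log-convex: passing to $\log f$ linearizes the product structure $f=\rho^{-1}\psi(h)$, the first derivative telescopes to $-\rho^{-1}+\mu'(\rho)/(1-h^2)$, and the second derivative is a sum of manifestly nonnegative terms governed by $\mu''>0$ and $1-h^2>0$, with no algebraic trick needed. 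The logarithmic approach is shorter and more transparent; the paper's direct computation, on the other hand, yields an explicit formula for $\partial_{\rho\rho}^2 f$, though this is not used elsewhere.
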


\begin{proof} 
By definition, $f(\rho,R) = \frac{1}{\rho} \psi(\mG)$, where $\mG := \mG(\rho,R) = \exp(\mu(\rho) - \mu(R))$ and 
$\psi, \mu$ are defined in formulas~\eqref{0402.1} and~\eqref{0402.5}. We have $\mu'(\rho) = \rho - \rho^{-1}$, $\psi'(x) = (1-x^2)^{-3/2}$, 
and $\pa_\rho \mG = (\rho - \rho^{-1}) \mG$. Thus,
\begin{equation*}
\pa_\rho f(\rho,R) = \mG (1 - \mG^2)^{-\frac32} [1 + (\mG^2 - 2) \rho^{-2}], \quad 
\pa^2_{\rho \rho} f(\rho,R) = \frac{\mG}{(1 - \mG^2)^{\frac52} \rho^3} \, h
\end{equation*}
where
\begin{equation*}
h := \rho^4 (1 + 2\eta) - 3 \rho^2 (1+\eta) + (2\eta^2 -5\eta +6), \quad 
\eta := \mG^2.
\end{equation*}
Then, 
\begin{align*}
h = \Big( \rho^2 \sqrt{1+2\eta} - \frac{3(1+\eta)}{2\sqrt{1+2\eta}} \Big)^2 
+ \frac{1-\eta}{4(1+2\eta)}\, (15 + 25 \eta - 16 \eta^2),
\end{align*}
and $0 < \eta < 1$ for all $R \in [d(R),R)$ because $\mu(\rho) - \mu(R) < 0$. 
Hence, $1-\eta > 0$ and also $15 + 25 \eta - 16 \eta^2 > 0$, for all $\eta \in (0,1)$. 
Therefore, $h>0$ and $\pa_{\rho \rho} f(\rho,R) > 0$, for all $\rho \in [d(R),R)$, $R \geq \overline R$.
\end{proof}

\begin{lemma}\label{lemma:f decr in R}
For all $R > \overline R$, all $\rho \in [d(R),R)$, there holds  
$\partial_R f(\rho,R) < 0$, that is, the function $R \mapsto f(\rho,R)$ is decreasing.
\end{lemma}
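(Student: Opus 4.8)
The plan is to differentiate the closed form of $f$ already obtained in the proof of Lemma~\ref{lemma:convex f}, namely
$f(\rho,R) = \frac{1}{\rho}\,\psi(\mG(\rho,R))$ with $\mG(\rho,R) = \exp(\mu(\rho)-\mu(R))$, directly with respect to $R$ at a \emph{fixed} $\rho \in [d(R),R)$, and to check that each factor that appears has a definite sign. This is the natural move since we already have clean expressions for $\mu'$ and $\psi'$ from that same proof.

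By the chain rule, $\partial_R f(\rho,R) = \frac{1}{\rho}\,\psi'(\mG)\,\partial_R \mG$. We have $\partial_R \mG = -\mu'(R)\,\mG$ and $\mu'(R) = R - R^{-1} > 0$ for every $R > 1$ (we only use $R > \overline R > 1$), so $\partial_R \mG < 0$. On the other hand $\psi'(x) = (1-x^2)^{-3/2} > 0$ for $x \in (0,1)$, and $\mG(\rho,R) \in (0,1)$ because $\mu(\rho) - \mu(R) < 0$ for all $\rho\in[d(R),R)$ — a fact already recorded in the preliminary discussion above, which itself follows from the strict convexity of $\mu$ together with $\mu(d(R)) = \mu(R) + \log(\sqrt{3}/2) < \mu(R)$ and $\mu(1) = \tfrac12 < \mu(R)$, so that the maximum of $\mu$ on $[d(R),R]$ is $\mu(R)$, attained only at the right endpoint. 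Finally $1/\rho > 0$. Multiplying the three factors gives $\partial_R f(\rho,R) < 0$, which is the claim.

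I expect no real obstacle here: the only point deserving an explicit line is that $0 < \mG < 1$ throughout $[d(R),R)$, so that $\psi'(\mG)$ is finite and positive and the differentiation under the outer $\psi$ is legitimate; I would state this plainly, pointing back to the strict convexity of $\mu$. The reason this lemma matters is that, combined with the strict convexity in $\rho$ from Lemma~\ref{lemma:convex f} and the explicit bounds on $d(R)$ and $\overline R$ from Lemmas~\ref{lemma:approx d(R)}, \ref{lemma:approx bar R} and \ref{lemma:bar R value}, the monotonicity of $R \mapsto f(\rho,R)$ is what lets one bound $J(R)$ uniformly in $R \geq \overline R$, reducing Proposition~\ref{prop:int} to a computation over a finite range of $R$ amenable to the interval-arithmetic estimates.
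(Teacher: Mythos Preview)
Your proof is correct and is essentially identical to the paper's: both compute $\partial_R f(\rho,R) = -\rho^{-1}\mG(1-\mG^2)^{-3/2}(R-R^{-1})$ via the chain rule applied to $f = \rho^{-1}\psi(\mG)$ and observe that every factor has a definite sign. The paper's version is simply terser, writing the formula in one line and omitting the justification that $0<\mG<1$ (which it had already recorded earlier).
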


\begin{proof} With the same notation as in the proof of Lemma~\ref{lemma:convex f}, we have 
$$
\pa_R f(\rho,R) = - \rho^{-1} \mG (1 - \mG^2)^{-3/2} (R - R^{-1}) < 0.
$$
\end{proof}

\subsection{Proof of Proposition~\ref{prop:int} for ``small'' $R$}

\begin{lemma}
There holds $J(R) < \frac32$ for all $R \in \bigl[\overline R, \frac{23}{16}\bigr]$. 
\end{lemma}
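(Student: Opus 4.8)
The plan is to split the interval of integration in $J(R)=\int_{d(R)}^{R}f(\rho,R)\,d\rho$ at the two fixed abscissae $\rho=1$ and $\rho_0:=\tfrac{21}{16}$ — observe that $1<\rho_0<\tfrac75<\overline R\le R\le\tfrac{23}{16}$ by Lemma~\ref{lemma:bar R value}, while $d(R)\le d(\overline R)=1$ since $R\mapsto d(R)$ is decreasing — and to estimate the three resulting integrals separately, using throughout the identity $f(\rho,R)=\tfrac1\rho\,\psi\big(\exp\{\mu(\rho)-\mu(R)\}\big)$ with $\psi,\mu$ as in~\eqref{0402.5},~\eqref{0402.1} and the fact that $\psi$ is increasing on $(0,1)$. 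On the first piece $[d(R),1]$ the function $\mu$ is decreasing, so $\mu(\rho)\le\mu(d(R))$, hence by the defining relation~\eqref{0402.3} one has $\mu(\rho)-\mu(R)\le\mu(d(R))-\mu(R)=\log(\tfrac{\sqrt3}{2})<0$, so that $f(\rho,R)\le\tfrac1\rho\,\psi(\tfrac{\sqrt3}{2})=\tfrac{\sqrt3}{\rho}$. A one-line interval-arithmetic check of $\mu(\tfrac{23}{16})+\log(\tfrac{\sqrt3}{2})<\mu(\tfrac56)$ yields $d(\tfrac{23}{16})>\tfrac56$, hence $d(R)>\tfrac56$ for all $R\in[\overline R,\tfrac{23}{16}]$, and therefore
\[
\int_{d(R)}^{1} f(\rho,R)\,d\rho\;\le\;\sqrt3\int_{d(R)}^{1}\frac{d\rho}{\rho}\;=\;\sqrt3\,\log\frac{1}{d(R)}\;<\;\sqrt3\,\log\frac65.
\]

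On the middle piece $[1,\rho_0]$ I use monotonicity in $R$: for each fixed $\rho\in[1,\rho_0]$ one has $\rho\in[d(R),R)$ for every $R\in[\overline R,\tfrac{23}{16}]$, so $R\mapsto f(\rho,R)$ is decreasing there by Lemma~\ref{lemma:f decr in R}, whence $f(\rho,R)\le f(\rho,\overline R)$. Since $\mu(\overline R)=\tfrac12-\log(\tfrac{\sqrt3}{2})$ is explicit by~\eqref{0402.2}, and $\rho\mapsto f(\rho,\overline R)$ is convex on $[d(\overline R),\overline R)=[1,\overline R)\supset[1,\rho_0]$ by Lemma~\ref{lemma:convex f}, I bound $\int_{1}^{\rho_0}f(\rho,\overline R)\,d\rho$ from above by a trapezoidal sum over a finite partition $1=v_0<v_1<\dots<v_N=\rho_0$, i.e.\ by $\sum_{j=1}^{N}\tfrac{v_j-v_{j-1}}{2}\big(f(v_{j-1},\overline R)+f(v_j,\overline R)\big)$, each of the finitely many numbers $f(v_j,\overline R)$ being enclosed rigorously by interval arithmetic.

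On the last piece $[\rho_0,R]$, where $f(\cdot,R)$ has the integrable $(R-\rho)^{-1/2}$ singularity at $\rho=R$, I remove the singularity by the substitution $t=\exp\{\mu(\rho)-\mu(R)\}$: since $\rho>1$ here, $d\rho=\tfrac{dt}{(\rho-\rho^{-1})t}$ and $f(\rho,R)\,d\rho=\tfrac1\rho\,\psi(t)\,d\rho=\tfrac{dt}{(\rho^2-1)\sqrt{1-t^2}}$, with $t$ increasing from $t_0:=\exp\{\mu(\rho_0)-\mu(R)\}$ to $1$. Bounding $\rho^2-1\ge\rho_0^2-1>0$, using $\int_{t_0}^{1}\tfrac{dt}{\sqrt{1-t^2}}=\tfrac{\pi}{2}-\arcsin t_0$, and then using that $\mu$ and $\arcsin$ are increasing (so the bound grows with $R$), we get
\[
\int_{\rho_0}^{R} f(\rho,R)\,d\rho\;\le\;\frac{\tfrac{\pi}{2}-\arcsin\big(\exp\{\mu(\rho_0)-\mu(R)\}\big)}{\rho_0^{2}-1}\;\le\;\frac{\tfrac{\pi}{2}-\arcsin\big(\exp\{\mu(\rho_0)-\mu(\tfrac{23}{16})\}\big)}{\rho_0^{2}-1},
\]
a concrete number that interval arithmetic bounds from above.

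Adding the three bounds and verifying by interval arithmetic that the total is $<\tfrac32$ completes the proof. The main obstacle is \emph{sharpness}: the bound $\tfrac32$ is not generous on this range of $R$, so a single trapezoid on $[1,\rho_0]$ does not suffice (a handful of subintervals is needed), and $\rho_0$ must be chosen safely away from $1$ — otherwise the convexity estimate wastes too much, as $f(\cdot,R)$ has a shallow minimum near $\rho=1$ — and safely away from $\overline R$ — otherwise $t_0$ is too close to $1$ and the estimate of the singular piece blows up; the value $\rho_0=\tfrac{21}{16}$ meets both constraints.
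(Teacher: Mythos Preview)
Your proof is correct and follows essentially the same route as the paper: the same split points $\rho=1$ and $\rho_0=\tfrac{21}{16}$, the identical substitution $t=\exp\{\mu(\rho)-\mu(R)\}$ on the singular piece $[\rho_0,R]$ with the bound $\rho^2-1\ge\rho_0^2-1$, and the same use of Lemma~\ref{lemma:f decr in R} to freeze $R$ on the middle piece. The only cosmetic differences are that on $[d(R),1]$ the paper uses convexity (a single trapezoid with vertices $f(d(R),R)=\sqrt3/d(R)$ and $f(1,R)\le\sqrt3$) whereas you use the pointwise bound $f(\rho,R)\le\sqrt3/\rho$ and integrate, and that on $[1,\rho_0]$ the paper applies a \emph{single} trapezoid bounded via $f(\rho_0,R)<f(\rho_0,7/5)$ --- which in fact already suffices (the paper's final interval-arithmetic total is $<1.4712$), so your remark that ``a single trapezoid does not suffice'' is overly cautious, though of course subdividing further does no harm.
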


\begin{proof}
Recalling the bound in Lemma~\ref{lemma:bar R value}, we split 
\begin{equation*}
J(R) = \int_{d(R)}^1f(\rho,R) \, d\rho + \int_1^{\rho_2}f(\rho,R) \, d\rho + \int_{\rho_2}^{R}f(\rho,R) \, d\rho, 
\qquad \rho_2 := \frac{21}{16}.
\end{equation*}
Since $f(\cdot,R)$ is convex (Lemma~\ref{lemma:convex f}), 
the integral over the region $[d(R),1]$ is bounded by the area of the trapezoid
\begin{equation*}
\int_{d(R)}^1 f(\rho,R) \, d\rho \leq \frac{1 - d(R)}{2} \, \bigl( f(d(R),R) + f(1,R) \bigr).
\end{equation*}
Now $f(d(R),R) = \sqrt{3} / d(R)$, while, recalling that $d(\overline R) = 1$,  we have
\begin{align*}
f(1,R) =&\, \psi(\exp\{ \mu(1) - \mu(R) \})
\leq \psi(\exp\{ \mu(1) - \mu(\overline R) \})\\  
=&\, \psi(\exp\{ \mu(d(\overline R)) - \mu(\overline R) \})
= \psi \Big( \frac{\sqrt{3}}{2} \Big) = \sqrt{3}.
\end{align*}
Hence, using that $d(R) \geq d(23/16)$, we obtain
\begin{equation*}
\int_{d(R)}^1 f(\rho,R) \, d\rho 
\leq \frac{1 - d(R)}{2} \, \Big( \sqrt{3} + \frac{\sqrt{3}}{d(R)} \Big)
\leq \frac{1 - d(23/16)}{2} \, \Big( \sqrt{3} + \frac{\sqrt{3}}{d(23/16)} \Big).
\end{equation*}
The integral over the region $[1,\rho_2]$ is also bounded by the area of the corresponding trapezoid, namely,
\begin{equation} \label{0402.19}
\int_{1}^{\rho_2} f(\rho,R) \, d\rho 
< \frac{\rho_2 - 1}{2} \, \Big( \sqrt{3} + f(\rho_2,R) \Big).
\end{equation}
The integral over the region $[\rho_2,R)$ is bounded by using the change of variable 
\begin{equation} \label{cambio rho xi}
e^{\mu(\rho) - \mu(R)} = \xi, \quad 
\frac{(\rho^2 - 1)e^{\mu(\rho) - \mu(R)}}{\rho} \, d\rho = d \xi,
\end{equation}
which gives
\begin{align}
\int_{\rho_2}^R f(\rho,R) \, d\rho 
& = \int_{\rho_2}^R \frac{1}{\rho^2 - 1}\, \frac{1}{\sqrt{1 - e^{2[\mu(\rho) - \mu(R)]}}} \, 
\frac{(\rho^2 - 1) e^{\mu(\rho) - \mu(R)}}{\rho} \, d\rho 
\notag \\ & 
\leq  \frac{1}{\rho_2^2 - 1}\, \int_{\rho_2}^R \frac{1}{\sqrt{1 - e^{2[\mu(\rho) - \mu(R)]}}} \, 
\frac{(\rho^2 - 1) e^{\mu(\rho) - \mu(R)}}{\rho} \, d\rho 
\notag \\ & 
= \frac{1}{\rho_2^2 - 1}\, \int_{\xi(\rho_2)}^{\xi(R)} 
\frac{1}{\sqrt{1 - \xi^2}} \, d\xi
\notag \\ & 
= \frac{1}{\rho_2^2 - 1}\, \Big( \frac{\pi}{2}\, - \arcsin(e^{\mu(\rho_2) - \mu(R)}) \Big).
\label{0402.20}
\end{align}
Using Lemmas~\ref{lemma:bar R value} and~\ref{lemma:f decr in R}, 
we have $f(\rho_2,R) < f(\rho_2, 7/5)$ and $\mu(R) \leq \mu(23/16)$, 
and we insert these bounds in~\eqref{0402.19} and~\eqref{0402.20}. 
Since $\rho_2 = 21/16$, we get 
\begin{align} 
J(R) & 
\leq \frac{1 - d(23/16)}{2} \, \Big( \sqrt{3} + \frac{\sqrt{3}}{d(23/16)} \Big) 
+ \frac{5}{32} \, \Big( \sqrt{3} + f\Big(\frac{21}{16}, \frac75\Big) \Big) 
\notag \\ & \quad 
+ \frac{256}{185}\, \Big( \frac{\pi}{2}\, - \arcsin(e^{\mu(21/16) - \mu(23/16)}) \Big).
\label{0402.21}
\end{align}
Using Lemma~\ref{lemma:approx d(R)} and {\em interval arithmetic} for correct roundings, we obtain that the right--hand side of~\eqref{0402.21} is strictly less than 1.4712. 
\end{proof}

\subsection{Proof of Proposition~\ref{prop:int} for ``large'' $R$}

\begin{lemma}
There holds $J(R) < \frac32$ for all $R \in [4, +\infty)$. 
\end{lemma}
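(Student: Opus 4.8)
The plan is to split the integral at $\rho=1$, writing $J(R)=J_1(R)+J_2(R)$ with $J_1(R):=\int_{d(R)}^{1}f(\rho,R)\,d\rho$ and $J_2(R):=\int_{1}^{R}f(\rho,R)\,d\rho$. The heuristic is that $J_1$ is the ``main'' term — it tends to $\arcsin(\tfrac{\sqrt3}{2})=\tfrac{\pi}{3}$ as $R\to+\infty$ — while $J_2\to 0$, so, since $\tfrac{\pi}{3}<\tfrac32$, there is comfortable room to be lossy in the estimates provided one keeps the constants under control and uses that $R\ge 4$ forces $\mu(R)\ge\mu(4)=8-\log 4$ to be large, hence $e^{-\mu(R)}$ and $\xi_0:=e^{\frac12-\mu(R)}$ exponentially small.

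For $J_2$ I would split once more at a fixed $\rho_2\in(1,4)$, say $\rho_2=3$. On $[\rho_2,R]$ I use the change of variable~\eqref{cambio rho xi} together with $\rho^2-1\ge\rho_2^2-1$, exactly as in the previous lemma, so that $\int_{\rho_2}^{R}f(\rho,R)\,d\rho\le\frac{1}{\rho_2^2-1}\big(\tfrac{\pi}{2}-\arcsin e^{\mu(\rho_2)-\mu(R)}\big)\le\frac{\pi/2}{\rho_2^2-1}$. On $[1,\rho_2]$ the factor $e^{\mu(\rho)-\mu(R)}\le e^{\mu(\rho_2)-\mu(R)}$ is tiny for $R\ge 4$, so $\sqrt{1-e^{2[\mu(\rho)-\mu(R)]}}$ is within a harmless factor of $1$ and, using $\rho\ge 1$ and that $\mu$ is increasing (so $e^{\mu(\rho)-\mu(R)}\le e^{\mu(\rho)-\mu(4)}$), one gets $\int_{1}^{\rho_2}f(\rho,R)\,d\rho\lesssim e^{-\mu(4)}\int_{1}^{\rho_2}\rho^{-1}e^{\rho^2/2}\,d\rho$, an explicit small number certified by interval arithmetic.

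For $J_1$ I would pass to the variable $\xi=e^{\mu(\rho)-\mu(R)}$, which is strictly decreasing on $[d(R),1]$ and maps it onto $[\xi_0,\tfrac{\sqrt3}{2}]$, so that $J_1(R)=\int_{\xi_0}^{\sqrt3/2}\frac{d\xi}{(1-\rho_-(\xi)^2)\sqrt{1-\xi^2}}$, where $\rho_-(\xi)\in[d(R),1]$ solves $\mu(\rho_-)=\mu(R)+\log\xi$. From $-\log\rho_-=\mu(R)+\log\xi-\tfrac12\rho_-^2\ge\mu(R)+\log\xi-\tfrac12$ (using $\rho_-\le1$) one gets the elementary bound $\rho_-(\xi)\le\xi_0/\xi$, hence $\frac{1}{1-\rho_-^2}\le\frac{\xi^2}{\xi^2-\xi_0^2}$; on the range $\xi\ge2\xi_0$ this is $1+\frac{\xi_0^2}{\xi^2-\xi_0^2}\le 1+\frac{4\xi_0^2}{3\xi^2}$, so, using the explicit primitive $-\sqrt{1-\xi^2}/\xi$ of $1/(\xi^2\sqrt{1-\xi^2})$,
\[
\int_{2\xi_0}^{\sqrt3/2}\frac{d\xi}{(1-\rho_-^2)\sqrt{1-\xi^2}}\le\int_{2\xi_0}^{\sqrt3/2}\frac{d\xi}{\sqrt{1-\xi^2}}+\frac{4\xi_0^2}{3}\int_{2\xi_0}^{\sqrt3/2}\frac{d\xi}{\xi^2\sqrt{1-\xi^2}}\le\frac{\pi}{3}+\frac{2}{3}\xi_0 .
\]
The remaining sliver $[\xi_0,2\xi_0]$ — equivalently $\rho$ near $1$ — is the one genuine obstacle: there $\rho_-\to1$, so $1-\rho_-^2\to0$ and the bound $\rho_-\le\xi_0/\xi$ is useless. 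I would handle it by reverting to the $\rho$ variable, writing $\int_{\xi_0}^{2\xi_0}\frac{d\xi}{1-\rho_-^2}=e^{-\mu(R)}\int_{\rho_-(2\xi_0)}^{1}\rho^{-2}e^{\rho^2/2}\,d\rho$, where the lower endpoint satisfies $\mu(\rho_-(2\xi_0))=\mu(R)+\log(2\xi_0)=\tfrac12+\log2$ and is therefore a fixed positive constant (bounded below by, say, $\tfrac{3}{10}$) independent of $R$; since moreover $\sqrt{1-\xi^2}\ge\sqrt{1-4\xi_0^2}$ on this interval, the whole sliver contributes $O(e^{-\mu(R)})$, negligible for $R\ge4$. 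The point is that the apparent blow-up of $\frac{1}{1-\rho^2}$ at $\rho=1$ is cancelled once one changes back to $d\rho$.

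Collecting the three pieces yields $J(R)<\frac{\pi}{3}+\frac{\pi/2}{\rho_2^2-1}+\big(\text{terms that are }O(e^{-\mu(R)})\big)$, and for $\rho_2=3$ and $R\ge4$ this is below $\tfrac32$ with room to spare; all the finitely many explicit numerical inequalities (values of $\mu$, of exponentials, of $\arcsin$, and the lower bound on $\rho_-(2\xi_0)$) are certified with interval arithmetic, exactly as in the preceding lemmas. The only delicate step to get right is the sliver estimate near $\rho=1$, both in identifying the $R$-independent quantity $\mu(\rho_-(2\xi_0))=\tfrac12+\log2$ and in the cancellation of the singularity of $\frac{1}{1-\rho^2}$.
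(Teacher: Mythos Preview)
Your argument is correct (modulo a harmless slip: since $e^{\mu(\rho)}=\rho^{-1}e^{\rho^2/2}$, the integrand on $[1,\rho_2]$ should be $\rho^{-2}e^{\rho^2/2}$, not $\rho^{-1}e^{\rho^2/2}$). The execution, however, is genuinely different from the paper's. On the right of $\rho=1$ the paper takes a \emph{moving} cutoff $\rho_2=R-1$, bounds the tail via the elementary inequality $\psi(e^x)<(2|x|)^{-1/2}$, and then verifies that every resulting term is monotone decreasing in $R$, so a single evaluation at $R=4$ suffices; your fixed $\rho_2=3$ avoids that monotonicity check at the cost of a bound ($\pi/16$) that does not decay. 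On the left of $\rho=1$ the paper simply introduces a fixed intermediate point $\rho_1=\tfrac18$ and uses the crude estimate $\tfrac{1}{1-\rho^2}\le\tfrac{1}{1-\rho_1^2}$ on $[d(R),\rho_1]$, which completely sidesteps your sliver issue: the point $\rho=1$ is then only met on $[\rho_1,1]$, where $\psi$ is bounded by a constant and one just integrates $1/\rho$. Your inequality $\rho_-(\xi)\le\xi_0/\xi$ together with the $R$-independent identity $\mu(\rho_-(2\xi_0))=\tfrac12+\log2$ is sharper and recovers the true asymptotic $\pi/3$, but the paper's approach gets under $3/2$ with less work; in effect, you traded the paper's monotonicity argument on the right for a more delicate analysis on the left.
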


\begin{proof}
Let $R \geq 4$, we consider some $\rho_1, \rho_2$ such that $d(R) < \rho_1 < 1 < \rho_2 < R$ that we will choose later and we split the integral $J(R)$ of the four intervals.

\begin{enumerate}
\item The integral over $[d(R), \rho_1]$.\\ 
We use the change of variable~\eqref{cambio rho xi}. 
Since $\mu$ is decreasing on $(0,1)$, we get
\begin{equation} \label{0402.22}
\int_{d(R)}^{\rho_1} f(\rho,R) \, d\rho 
\leq \frac{1}{1 - \rho_1^2} \, \arcsin(\xi) \Big|_{\xi(\rho_1)}^{\xi(d(R))} 
= \frac{1}{1 - \rho_1^2} \, \Big( \frac{\pi}{3}\, - \arcsin(\xi(\rho_1)) \Big) 
< \frac{1}{1 - \rho_1^2} \, \frac{\pi}{3}.
\end{equation}

\item The integral over $[\rho_1,1]$.\\
Since $\mu$ is decreasing on $(0,1)$, we have
\begin{align}
\int_{\rho_1}^1 f(\rho,R) \, d\rho 
& = \int_{\rho_1}^1 \frac{1}{\rho} \, \psi \big( e^{\mu(\rho) - \mu(R)} \big) 
\notag \\ & 
\leq \psi \big( e^{\mu(\rho_1) - \mu(R)} \big) \int_{\rho_1}^1 \frac{1}{\rho} \, d\rho 
= \log \Big( \frac{1}{\rho_1} \Big) \psi \big( e^{\mu(\rho_1) - \mu(R)} \big).
\label{0402.23}
\end{align}

\item The integral over $[1,\rho_2]$.\\
Since $\mu$ is increasing on $(1,+\infty)$, we have
\begin{align}
\int_{1}^{\rho_2} f(\rho,R) \, d\rho 
& = \int_{1}^{\rho_2} \frac{1}{\rho} \, \psi \big( e^{\mu(\rho) - \mu(R)} \big) 
\notag \\ & 
\leq \psi \big( e^{\mu(\rho_2) - \mu(R)} \big) \int_1^{\rho_2} \frac{1}{\rho} \, d\rho 
= \log(\rho_2) \psi \big( e^{\mu(\rho_2) - \mu(R)} \big).
\label{0402.24}
\end{align}

\item The integral over $[\rho_2,R)$.\\
Using the general inequality
\begin{equation*}
\psi(e^x) = \frac{e^x}{\sqrt{1 - e^{2x}}} \, < \frac{1}{\sqrt{2|x|}} \quad \forall x < 0
\end{equation*}
we get 
\[
\psi(e^{\mu(\rho) - \mu(R)}) 
< \frac{1}{\sqrt{R^2 - \rho^2 - \log(R^2 \rho^{-2})}}
\]
and, since $\log(1+x) < x$ for all $x > -1$, we deduce that 
\[
f(\rho,R) < \frac{1}{\sqrt{(R^2 - \rho^2)(\rho^2 - 1)}} .
\]
Hence, 
\begin{equation} \label{0402.30}
\int_{\rho_2}^R f(\rho,R) \, d\rho 
< \frac{1}{\sqrt{(R + \rho_2)(\rho_2^2 - 1)}} \, \int_{\rho_2}^R \frac{1}{\sqrt{R-\rho}} \,d\rho
= \frac{2 \sqrt{R - \rho_2}}{\sqrt{(R + \rho_2)(\rho_2^2 - 1)}} .
\end{equation}
\end{enumerate}
The sum of the integrals~\eqref{0402.22},~\eqref{0402.23},~\eqref{0402.24} and~\eqref{0402.30}, 
with $\rho_2 = R-1$ and some $\rho_1$, gives 
\begin{equation} \label{0402.31}
J(R) \leq Q_1(\rho_1) + Q_2(\rho_1,R) + Q_3(R) + Q_4(R) =: Q(\rho_1,R)
\end{equation}
where
\begin{equation*}
Q_1(\rho_1) := \frac{1}{1 - \rho_1^2} \, \frac{\pi}{3}, 
\qquad 
Q_2(\rho_1,R) := \log \Big( \frac{1}{\rho_1} \Big) \psi \big( e^{\mu(\rho_1) - \mu(R)} \big),
\end{equation*}
\begin{equation*}
Q_3(R) := \log(R-1) \psi \big( e^{\mu(R-1) - \mu(R)} \big), \qquad 
Q_4(R) := \frac{2}{\sqrt{(2R-1)(R^2-2R)}}.
\end{equation*}
For any $\rho_1$, the value $Q_1(\rho_1)$ does not depend on $R$ and $Q_2(\rho_1,R), Q_4(R)$ are decreasing functions of $R$ on $[4,+\infty)$. 
Computing the derivative of $Q_3(R)$, we see that $Q_3$ is decreasing if 
\begin{equation*}
R - e^{1-2R} \frac{R^3}{(R-1)^2} - (R^2 - R + 1) \log(R-1) < 0.
\end{equation*}
Since 
$$
R - (R^2 - R) \log(R-1) = R [1 - (R-1) \log(R-1)] < 0
$$
for $R \geq 4$, we deduce that also $Q_3$ is decreasing on $[4,+\infty)$. 
Hence, the function $R \mapsto Q(\rho_1,R)$ defined in formula~\eqref{0402.31} is decreasing on $[4,+\infty)$, 
therefore,
\[
J(R) \leq Q(\rho_1,R) \leq Q(\rho_1,4) \quad \forall R \geq 4.
\]
We fix $\rho_1 = 1/8$ and we note that $d(R) \leq d(4) < 1/500 < \rho_1 = 1/8$ for all $R \geq 4$ 
(Lemma~\ref{lemma:approx d(R)} and exact rounding with {\em interval arithmetic} give $d(4) < 0.00155$). 
We compute $Q(1/8, 4) < 1.4$, thus $J(R) < 1.4$, for $R \geq 4$.
\end{proof}

\subsection{Proof of Proposition~\ref{prop:int} for ``intermediate'' $R$}

\begin{lemma}
There holds $J(R) < 1.52$ for all $R \in\bigl[\frac{23}{16}, 4\bigr]$. 
\end{lemma}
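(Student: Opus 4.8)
The plan is to run, on a finite partition of the compact interval $\bigl[\tfrac{23}{16},4\bigr]$, the two kinds of estimate already developed: the trapezoid bound based on the strict convexity of $f(\cdot,R)$ (Lemma~\ref{lemma:convex f}), which is efficient when $d(R)$ is not small, and the change of variable~\eqref{cambio rho xi}, which simultaneously controls the $(R-\rho)^{-1/2}$ singularity of $f$ at $\rho=R$ and the smallness of $d(R)$. The conclusion will follow from a rigorous interval--arithmetic check, involving only the elementary functions listed in the introduction.

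I would fix a partition $\tfrac{23}{16}=R_0<R_1<\dots<R_N=4$, sufficiently fine, and bound $J(R)$ uniformly on each $[R_j,R_{j+1}]$. Since $R\mapsto d(R)$ is strictly decreasing (remarked right after Proposition~\ref{prop:int}) and $R\mapsto f(\rho,R)$ is decreasing (Lemma~\ref{lemma:f decr in R}), for every $R\in[R_j,R_{j+1}]$ and every cutoff $1<\rho_2<R_j$ one has
\[
\int_{d(R)}^{\rho_2} f(\rho,R)\,d\rho \;\le\; \int_{d(R_{j+1})}^{\rho_2} f(\rho,R_j)\,d\rho ,
\]
which no longer depends on $R$, while for the remaining singular tail the substitution~\eqref{cambio rho xi}, together with $\rho^2-1\ge\rho_2^2-1>0$ on $[\rho_2,R]$, gives
\[
\int_{\rho_2}^{R} f(\rho,R)\,d\rho \;\le\; \frac{1}{\rho_2^2-1}\Bigl(\tfrac{\pi}{2}-\arcsin\bigl(e^{\mu(\rho_2)-\mu(R)}\bigr)\Bigr) \;\le\; \frac{1}{\rho_2^2-1}\Bigl(\tfrac{\pi}{2}-\arcsin\bigl(e^{\mu(\rho_2)-\mu(R_{j+1})}\bigr)\Bigr),
\]
the last step because $\mu$ is increasing on $(1,+\infty)$, so the bracket is largest at $R=R_{j+1}$.

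It then remains to bound the $R$--independent integral $\int_{d(R_{j+1})}^{\rho_2} f(\rho,R_j)\,d\rho$, which I would do as in the ``large $R$'' proof: split at a cutoff $d(R_{j+1})<\rho_1<1$, estimate the piece over $[d(R_{j+1}),\rho_1]$ by the substitution~\eqref{cambio rho xi} (producing a term $\tfrac{1}{1-\rho_1^2}\bigl(\tfrac{\pi}{3}-\arcsin(\,\cdot\,)\bigr)$, harmless however small $d$ is), and estimate the piece over $[\rho_1,\rho_2]$---which straddles $\rho=1$, where $f$ is smallest---by the trapezoid rule, legitimate because $f(\cdot,R_j)$ is strictly convex there by Lemma~\ref{lemma:convex f}, sub--partitioning $[\rho_1,\rho_2]$ if a sharper bound is needed; lower bounds for $\rho_1$ and for the $\arcsin$ arguments come from the enclosures of $d(R_{j+1})$ in Lemma~\ref{lemma:approx d(R)}. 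Summing the three contributions yields, for each $j$, an explicit quantity $\mathcal Q_j$ with $J(R)\le\mathcal Q_j$ for all $R\in[R_j,R_{j+1}]$, and one finally checks $\mathcal Q_j<1.52$ by interval arithmetic.

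The main obstacle is that here the estimate is genuinely tight---the two previous lemmas could afford the cleaner bound $3/2$, whereas in this range only $1.52<\pi/2$ appears reachable---so no single crude estimate works: the partition must be fine, and the sharp substitution--based bounds (rather than trapezoids) must be used near \emph{both} $\rho=d(R)$ and $\rho=R$. A fine partition is also needed for the bookkeeping to be valid, namely to guarantee $d(R_{j+1})<\rho_1<1<\rho_2<R_j$ on every subinterval and that $\mu(R_{j+1})-\mu(R_j)$ is small enough (below $\log(2/\sqrt3)$) for $f(\cdot,R_j)$ and its monotonicity in $R$ to remain valid on the slightly enlarged interval $[d(R_{j+1}),\rho_2]$; both hold once the mesh is small, using Lemma~\ref{lemma:approx d(R)} and the bound $\overline R<\tfrac{23}{16}$ of Lemma~\ref{lemma:bar R value}. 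Beyond this, the argument is a direct though lengthy interval--arithmetic computation over the partition.
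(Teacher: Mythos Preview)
Your plan is essentially the same strategy the paper uses: partition $[\tfrac{23}{16},4]$ into many small subintervals in $R$, on each one combine trapezoid bounds (via the convexity Lemma~\ref{lemma:convex f}) with the substitution~\eqref{cambio rho xi} near the singular endpoint $\rho=R$, and finish with interval arithmetic. The paper takes $M=2624$ subintervals of width $2^{-10}$ and organizes the five pieces $J_0,\dots,J_4$ slightly differently (it keeps $R$ as an interval throughout the arithmetic rather than reducing to the endpoints $R_j,R_{j+1}$ via monotonicity, and it uses trapezoids rather than the substitution near $\rho=d(R)$), but the idea is the same.

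Two small points to fix in your plan. First, when you bound $\int_{d(R_{j+1})}^{\rho_1} f(\rho,R_j)\,d\rho$ by the substitution, the upper $\arcsin$ value is not $\tfrac{\pi}{3}$: since the integrand uses $R_j$ but the lower limit is $d(R_{j+1})$, one gets $\arcsin\bigl(e^{\mu(d(R_{j+1}))-\mu(R_j)}\bigr)=\arcsin\bigl(\tfrac{\sqrt3}{2}\,e^{\mu(R_{j+1})-\mu(R_j)}\bigr)$, which is strictly \emph{larger} than $\tfrac{\pi}{3}$; with a fine mesh the discrepancy is tiny, but you must use the correct value (or an upper bound for it) in the computation. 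Second, Lemma~\ref{lemma:convex f} as stated gives convexity of $f(\cdot,R_j)$ only on $[d(R_j),R_j)$, whereas your trapezoid step may use $\rho_1\in[d(R_{j+1}),d(R_j))$; either restrict to $\rho_1\ge d(R_j)$, or note (as the proof of that lemma in fact shows) that the convexity holds on the whole domain $(r(R_j),R_j)$.
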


\begin{proof}
We split the interval $\bigl[\frac{23}{16},4\bigr]$ into $M$ subintervals of length $\d_R$: 
we fix 
\begin{equation} \label{0902.1}
\d_R := 2^{-10}, \quad 
\mR_n := \Big[ \frac{23}{16} + (n-1) \d_R, \frac{23}{16} + n \d_R \Big], \quad 
n = 1,2,\ldots,M, 
\end{equation}
where $M :=  (4 - \frac{23}{16}) \d_R^{-1} = 2624$.
Note that all the numbers $\frac{23}{16} + n \d_R$ with $n \in \N \cap [1,M]$  
are represented exactly by any standard computer.
Now for each $n=1,\ldots,M$ we estimate the set $\{ J(R) : R \in \mR_n\}$. 

Let $\mR$ be any of the intervals $\mR_n$ and call $R_{min}, R_{max}$ its extremal points.  
Let $\a,\b$, with $\a < \b$, be two rational numbers that are represented exactly by the computer, 
such that $d(R) \in [\a,\b]$, for all $R \in \mR$. 
To estimate $J(R)$ for all $R \in \mR$, 
we split the integration interval $[d(R),R]$ into many subintervals.

First, let $\d_0 = 2^{-m_0}$, $m_0 \in \N$, be such that 
\begin{equation} \label{0902.2}
\beta + \d_0 \leq 1, 
\end{equation}
so that $d(R) + \d_0 \leq 1$ for all $R \in \mR$. 
Since $f(\rho,R)$ is convex in $\rho$ (Lemma~\ref{lemma:convex f}),
the integral over $[d(R), d(R)+\d_0]$ is bounded by the area of the trapezoid,
\begin{equation*}
\int_{d(R)}^{d(R)+\d_0} f(\rho,R) \, d\rho 
\leq \frac{\d_0}{2}\, \Big( \frac{\sqrt{3}}{d(R)}\, + f(d(R)+\d_0,R) \Big)
=: J_0(R).
\end{equation*}
Next, since $\a \leq d(R)$, we have
\begin{equation*}
\int_{d(R)+\d_0}^1 f(\rho,R) \, d\rho 
\leq \int_{\a + \d_0}^1 f(\rho,R) \, d\rho
\end{equation*}
and $[\a + \d_0,1]$ is contained in the domain $[d(R),R)$ of $f(\cdot,R)$ for all $R \in \mR$ if 
\begin{equation*}
\b \leq \a + \d_0. 
\end{equation*}
We define $m_0$ as the integer part $[-\log_2(\b-\a)]$, 
so that $\b-\a \leq \d_0 < 2(\b-\a)$. 
Inequality~\eqref{0902.2} holds if $\a,\b$ satisfy
\begin{equation*}
3 \b - 2 \a \leq 1.
\end{equation*}
We split the integration interval $[\a + \d_0,1]$ into $N_1$ subintervals 
of length $\d_1 = 2^{-10}$ and, possibly, one smaller interval.
Let $N_1$ be the integer part of $(1 - \a - \d_0) \d_1^{-1}$ and define
\begin{equation*}
x_k := (\a + \d_0) + (k-1) \d_1, \quad k = 1, \ldots, N_1 + 1.
\end{equation*}
Thus, $x_1 = \a+\d_0$ and $1 - \d_1 < x_{N_1 + 1} \leq 1$. 
Note that the numbers $x_k$ are exactly representable by the computer. 
Since $f(\cdot,R)$ is convex, one has 
\begin{equation} \label{0902.4} 
\int_{\a+\d_0}^1 f(\rho,R) \, d\rho 
\leq J_1(R) + \int_{x_{N_1 + 1}}^1 f(\rho,R) \, d\rho
\end{equation}
where
\begin{equation*}
J_1(R) := \frac{\d_1}{2}\, \Big( f(x_1, R) + f(x_{N_1 + 1}, R) \Big)
+ \d_1 \sum_{k=2}^{N_1} f(x_k, R) .
\end{equation*}
Next, let $\d_4 > 0$ be such that $R_{min} - \d_4 \geq 1$. 
Bound~\eqref{0402.30} gives
\begin{equation*}
\int_{R - \d_4}^R f(\rho,R) \, d\rho 
< \frac{2 \sqrt{\d_4}}{\sqrt{(2R -\d_4)[(R-\d_4)^2 - 1]}} \, =: J_4(R).
\end{equation*}
Since $R \leq R_{max}$,  
\begin{equation*}
\int_1^{R - \d_4} f(\rho,R) \, d\rho 
\leq \int_1^{R_{max} - \d_4} f(\rho,R) \, d\rho, 
\end{equation*}
and $[1,R_{max} - \d_4]$ is contained in the domain $[d(R),R)$ of $f(\cdot,R)$ for all $R \in \mR$ if 
\begin{equation*}
\d_4 > R_{max} - R_{min} = \d_R. 
\end{equation*}
Since $\d_R = 2^{-10}$ and $R_{min} \geq \frac{23}{16}$, we can fix $\d_4 = 2^{-6}$. 
We split the integration interval $[1, R_{max} - \d_4]$ into $N_3$ subintervals 
of length $\d_3 = 2^{-6}$ and, possibly, one smaller interval.
Let $N_3$ be the integer part of $(R_{max} - \d_4 - 1) \d_3^{-1}$ and define
\begin{equation*}
z_k := (R_{max} - \d_4) - (k-1) \d_3, \quad k = 1, \ldots, N_3 + 1.
\end{equation*}
Thus $z_1 = R_{max} - \d_4$ and $1 \leq z_{N_3 + 1} < 1 + \d_3$.
Note that the numbers $z_k$ are exactly representable on the computer. 
Since $f(\cdot,R)$ is convex, 
\begin{equation} \label{0902.5}
\int_1^{R_{max} - \d_4} f(\rho,R) \, d\rho 
\leq J_3(R) + \int_1^{z_{N_3 + 1}} f(\rho,R) \, d\rho,
\end{equation}
where
\begin{equation*}
J_3(R) := \frac{\d_3}{2}\, \Big( f(z_1, R) + f(z_{N_3 + 1}, R) \Big)
+ \d_3 \sum_{k=2}^{N_3} f(z_k, R).
\end{equation*}
Regarding the two remaining integrals~\eqref{0902.4} and~\eqref{0902.5}, one has 
\begin{equation*}
\int_{x_{N_1 + 1}}^1 f(\rho,R) \, d\rho
+ \int_1^{z_{N_3 + 1}} f(\rho,R) \, d\rho
= \int_{x_{N_1 + 1}}^{z_{N_3 + 1}} f(\rho,R) \, d\rho
\leq J_2(R)
\end{equation*}
where
\begin{equation*}
J_2(R) := \frac{z_{N_3 + 1} - x_{N_1 + 1}}{2} \Big( f(x_{N_1 + 1},R) + f(z_{N_3 + 1},R) \Big).
\end{equation*}
Thus, 
\begin{equation} \label{0802.1}
J(R) \leq \sum_{i=0}^4 J_i(R).
\end{equation}
We use {\em interval arithmetic} to find a correct rounding for each term $J_i(R)$, uniformly in $R \in \mR$ and we repeat the procedure for all intervals in formula~\eqref{0902.1}. 
We get $J(R) \leq 1.51734 < 1.52$ for all $R \in\bigl[\frac{23}{16},4\bigr]$. 
The maximum value of the upper bound for the right--hand side of inequality~\eqref{0802.1} is obtained on the interval $\mR_n$ with $n = 490$, corresponding to $R$ close to 1.915 
(but there is no reason for which the true function $J(R)$ and the upper bound we have computed should have a maximum at the same point).
Along the procedure the exponent $m_0$ of $\d_0$ assumes integer values between 5 and 17.\\ 
The computation takes about 18 minutes on our standard computers. The code we have used is in Appendix~\ref{sec:codes}.
\end{proof}

\section{Some consequences and open questions}
\label{final}

The arguments of Section~\ref{sec:pof} can be used to prove the following proposition, also conjectured in~\cite[Conjecture~3.22]{haettenschweiler}.

\begin{prop}
Any region of a regular shrinking network bounded by only two curves must contain the origin in its interior.
\end{prop}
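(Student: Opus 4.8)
The plan is to extract and rerun the arguments of Section~\ref{sec:pof}. Let $\Omega$ be a bounded region of a regular shrinking network whose boundary consists of exactly two curves $\g_1,\g_2$. Then $\g_1\cup\g_2$ is a simple closed curve with two distinct triple junctions $A,B$ as vertices, and at each of $A$ and $B$ the region $\Omega$ occupies a $120$--degree sector: the $240$--degree sector is impossible, since it would force the third curve at that junction into the interior of $\Omega$, which cannot happen for a complementary region of the network. By Section~\ref{sec:Cp}, each $\g_i$ is either a segment of a line through the origin or strictly convex with respect to the origin, with strictly monotone polar angle $\th_i$ in the latter case. Assume, for contradiction, that the origin is not in the interior of $\Omega$.

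\emph{Step 1: no boundary curve is a segment through the origin.} Suppose, say, that $\g_1$ is a segment on a line $\ell\ni O$; then $\ell$ passes through $A$ and $B$, and $\g_2$ is strictly convex with respect to $O$ (two distinct lines through $O$ meet only at $O$ and cannot bound a bigon). If $O$ does not lie on the segment $\overline{AB}$, then $A$ and $B$ have the same polar angle about $O$, so the strict monotonicity of $\th_2$ forces $\bigl|\int_{\g_2}\th_2'\bigr|=2\pi$, contradicting Lemma~\ref{lemma:no giro}. If instead $O$ lies in the interior of $\overline{AB}$, then $\int_{\g_2}\th_2'=\pm\pi$; on the other hand, the $120$--degree conditions at $A$ and at $B$ give, through~\eqref{1602.1}, that $|\rho_2'|=\frac12$ at each endpoint, with $\rho_2$ increasing at one and decreasing at the other, so $\rho_2$ has exactly one interior maximum (a second interior extremum would make $\bigl|\int_{\g_2}\th_2'\bigr|>\pi$ by Proposition~\ref{prop:mI}); applying Lemma~\ref{lemma:zucca} to the two monotone pieces of $\g_2$ then yields $\bigl|\int_{\g_2}\th_2'\bigr|<\pi$, a contradiction.

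\emph{Step 2: both $\g_1,\g_2$ strictly convex with respect to $O$.} Orient $\partial\Omega$ as a single closed loop; each $\g_i$ then winds strictly monotonically about $O$. If the two windings have the same sense, the winding number of $\partial\Omega$ about $O$ has absolute value $\frac{1}{2\pi}\bigl(\bigl|\int_{\g_1}\th_1'\bigr|+\bigl|\int_{\g_2}\th_2'\bigr|\bigr)\neq 0$, hence $O\in\operatorname{int}\Omega$, against our assumption --- this is precisely the argument placing the origin in the interior of $A_{13}$ in Section~\ref{sec:pof}. If the two windings are opposite, then $O$ lies outside $\Omega$ and $\g_1,\g_2$ subtend, from $O$, the same angular wedge, with $\Omega$ the region between them and $O$ at the apex; here I would repeat \emph{Case~1} of Section~\ref{sec:pof} with $\Omega$ in place of $A_{12}$. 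The $120$--degree conditions at $A$ and $B$, together with the convexity with respect to $O$ of the boundary curve nearer to $O$ and of the regions cut off by the third curves at $A$ and $B$, give through~\eqref{1602.1} that $\rho'\ge\frac12$ at one endpoint of the farther boundary curve and $\rho'\le-\frac12$ at the other, so that this curve has an interior maximum of $\rho$ lying more than a right angle around the wedge; extending it past an endpoint to a coordinate axis exactly as in Section~\ref{sec:pof}, and invoking Lemma~\ref{lemma:zucca} --- using Proposition~\ref{prop:mI} to limit the number of interior extrema of $\rho$ on the extended interval --- produces the contradiction.

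All cases being excluded, the origin lies in the interior of $\Omega$. The step I expect to be the main obstacle is the angular bookkeeping --- tracking the $120$--degree conditions at $A$ and $B$ through the various positions of $\Omega$ relative to $O$, especially the ``opposite windings'' case and the degenerate sub-cases of Step~1 --- since, unlike in Section~\ref{sec:pof}, there is no ambient $\Theta$--structure to lean on; but the two curves of $\partial\Omega$ and the third curves issuing from $A$ and $B$ furnish exactly the angular data needed to rerun those arguments.
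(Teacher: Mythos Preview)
Your overall strategy---rerun the arguments of Section~\ref{sec:pof}---is the paper's strategy too, but your execution diverges from the paper's in two places, and in each place you lose something you need.

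\textbf{The segment case.} The paper does not argue directly here: it reflects the non--segment curve across the line $\ell\ni O$ carrying the segment, obtaining (together with the segment itself as the middle curve) an honest regular $\Theta$--shrinker, which is excluded by Theorem~\ref{thm:main}. Your direct argument instead invokes Lemma~\ref{lemma:no giro}, but that lemma is proved only for the $\Theta$--configuration: its proof uses that \emph{all three} curves at $B$ terminate at $A$, so that a curve starting on the ``wrong side'' of $B$ is trapped. In a general network the third curve at $B$ need not reach $A$ at all, so the trapping argument does not transfer, and you have no a~priori bound $|\int\th_2'|<2\pi$. Both of your sub--cases (the first to exclude $|\int\th_2'|=2\pi$, the second to get $|\int\th_2'|=\pi$ rather than $3\pi$) rely on this. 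The reflection trick sidesteps the issue entirely.

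\textbf{The crescent case.} You split according to the winding sense in the loop $\partial\Omega$ and, in the ``opposite windings'' case, rerun only \emph{Case~1}. The paper instead splits according to the position of $O$ relative to the line through $A,B$, and uses \emph{Case~1} when $O$ is on the far side of that line and the \emph{energetic} argument of \emph{Case~2} when $O$ is on the near side. These two sub--cases correspond to whether the common angular sweep $\Delta$ of the two boundary curves is $<\pi$ or $>\pi$: your ``wedge with $O$ at the apex'' picture is exactly the sub--case $\Delta<\pi$, and your proposal says nothing about the wrap--around configuration $\Delta>\pi$, where the extension--and--Lemma~\ref{lemma:zucca} machinery of \emph{Case~1} is not available. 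The monotonicity/energy comparison of \emph{Case~2} (Proposition~\ref{prop:mI} applied to the two boundary curves) is what the paper uses there, and you have omitted it.

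Finally, even within your \emph{Case~1} re--run there is a real issue you flag but do not resolve: in the paper the bound $\rho_1'\geq\tfrac12$ at $B$ comes from the convexity of the region $A_{23}$ bounded by $\g_2$ and $\g_3$ and containing $O$, which pins the ray $BO$ into the $120^\circ$ sector opposite $\g_1'$. In the general network the third curve at $B$ bounds a region that need not be convex and need not contain $O$, so this step genuinely requires a new argument.
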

\begin{proof}
Such a region cannot be strictly convex, otherwise, by the shrinkers
equation the conclusion is immediate. If one of the curves is a
segment, by reflecting the region with respect to the straight line
containing such segment, which must pass through the origin, one would
obtain a $\Theta$--shrinker which is excluded by
Theorem~\ref{thm:main}. Hence, we suppose that the closed region is bounded by two curves
with the ``same convexity'' with respect to the origin, 
counterclockwise parametrized by arclength (hence going from the
triple junction $B$ to the triple junction $A$) and we assume that
the origin is outside the region.\\
As before, we separate
the analysis in two cases, according to the position of the origin
with respect to the straight line containing the segment
$\overline{AB}$. The case when the origin is below or belongs to such
line follows as in {\em Case~1} of the proof of Theorem~\ref{thm:main}
in Section~\ref{sec:pof}. The case when the origin is above the the
straight line containing the segment $\overline{AB}$, can be treated
with the same (energetic) arguments of {\em Case~2}.
\end{proof} 

As an immediate consequence, not only a $\Theta$--shrinker cannot
exist, but also there are no shrinking networks with more than one
region bounded by only two curves.\\
This is another step in the classification of the whole family of
shrinkers. For instance, it is easy to show that every bounded region
must have less than six bounding curves and its area is determined by
such number.\\
Several other results in this direction were obtained
in~\cite{schn-schu,schnurerlens,haettenschweiler}. We want to mention
the work of H\"attenschweiler~\cite{haettenschweiler} where it is
proposed the very interesting Conjecture~3.26 that there is an upper
bound for the possible number of bounded regions of a shrinker. This
clearly would imply that the possible topological structures of
compact shrinkers are finite.

\appendix
\section{Codes for interval arithmetic computations}
\label{sec:codes}
This is the code we have used to compute a correct rounding (in particular, an exact upper bound) for the right--hand side of~\eqref{0802.1}. 
  
\begin{tiny}  
\begin{verbatim}
tic ();
output_precision(6) 
delta_R = 2^(-10);
delta_1 = 2^(-10);
delta_3 = 2^(-6); 
delta_4 = 2^(-6); 
NN = 10; %% Number of iterations for the computation of d(R).
if (delta_4 < delta_R) 
  printf("Attention: it is delta_4 < delta_R, whereas it should be >=. \n")
endif

M = (4 - 23/16) / delta_R; 
J_0 = infsup(zeros(M,1)); 
J_1 = infsup(zeros(M,1));
J_2 = infsup(zeros(M,1));
J_3 = infsup(zeros(M,1));
J_4 = infsup(zeros(M,1));
J = infsup(zeros(M,1));
vec_R = infsup(zeros(M,1));
vec_dR = infsup(zeros(M,1));
m_0 = infsup(zeros(M,1));

for n=1:M
R_left = 23/16 + (n-1) * delta_R;
R_right = R_left + delta_R;
R = infsup( R_left , R_right ); % R is an interval temporary variable
vec_R(n) = R; % the value of R is stored in the vector vec_R at row n 

%% Here we calculate d(R) as a correctly rounding interval:
b = R^2 /2 - log(R) + (1/2) * log(infsup(3)) - log(infsup(2)); 
a = -b ;
A = infsup(zeros(NN,1)) ;
A(1) = a ;
for k = 2:NN
  A(k) = a + (1/2) * exp(2 * A(k-1)) ;
  endfor
alpha = inf(exp(A(NN))) ;
B = infsup(zeros(NN,1)) ;
B(1) = a + 1/2 ;
for k = 2:NN
  B(k) = a + (1/2) * exp(2 * B(k-1)) ;
  endfor
beta = sup(exp(B(NN))) ;
if (beta <= alpha)
  printf("Attention: alpha >= beta at row:\n")
  n
endif
if (3*beta - 2*alpha > 1)
  printf("Attention: 3 alpha - 2 beta > 1 at row:\n")
  n
endif
% Verify: it should be: 
% inf(ver_a) > 0  and  sup(ver_b) < 0.
ver_a = 1/2 * infsup(alpha)^2 - log(infsup(alpha)) - 1/2 * R^2 + log(R) - log(sqrt(infsup(3)) / 2);
ver_b = 1/2 * infsup(beta)^2 - log(infsup(beta)) - 1/2 * R^2 + log(R) - log(sqrt(infsup(3)) / 2);
if (inf(ver_a) <= 0) 
  printf("Attention: not accurate enclosure of d(R) at row:\n")
  n
endif
if (sup(ver_b) >= 0) 
  printf("Attention: not accurate enclosure of d(R) at row:\n")
  n
endif
dR = infsup( alpha , beta );
vec_dR(n) = dR;
m_0(n) = floor( - log2(beta - alpha) );
delta_0 = 2^(-m_0(n));
if (delta_0 < beta - alpha)
  printf("Attention: something wrong, delta_0 too small at row:\n")
  n
endif  
if (alpha == 0)
  printf("Attention: alpha = 0 at row:\n")
  n
endif  
if (beta + delta_0 > 1)
  printf("Attention: beta + delta_0 > 1 at row:\n")
  n
  printf("You should take a larger NN and/or a smaller delta_R")
  endif
%%%%%%%%%%%%%%%%%%%%%%%%%%%%%%%%%%%%%%
% J_0 
p = dR + delta_0;
fp = exp( p^2 / 2 - R^2 / 2) * R * (1/p)^2 * rsqrt( 1 - exp( p^2 - R^2 ) * R^2 * (1/p)^2 );
J_0(n) = delta_0 / 2 * ( sqrt(infsup(3)) / dR + fp );
%%%%%%%%%%%%%%%%%%%%%%%%%%%%%%%%%%%%%%
% J_1
N_1 = floor( (1 - alpha - delta_0) / delta_1 );
x = zeros(N_1 + 1,1);
x(1) = alpha + delta_0;
for i = 2 : N_1+1
  x(i) = x(i-1) + delta_1;
  endfor
if (x(N_1 + 1) > 1) 
  printf("Attention: x(N_1 + 1) > 1 at row:\n") 
  n
  endif 
y = infsup(zeros(N_1+1,1));
y = exp( x.^2 / 2 - R^2 / 2) * R .* ((1./x).^2) .* rsqrt( 1 - exp( x.^2 - R^2 ) * R^2 .* ((1./x).^2) );
J_1(n) = delta_1 * (y(1) + y(N_1+1)) / 2 + delta_1 * sum( y(2 : N_1) );
%%%%%%%%%%%%%%%%%%%%%%%%%%%%%%%%%%%%%%
%%% J_4
J_4(n) = 2 * sqrt(infsup(delta_4)) * rsqrt(2*R - delta_4) * rsqrt((R - delta_4)^2 - 1);
%%%%%%%%%%%%%%%%%%%%%%%%%%%%%%%%%%%%%%
%%% J_3
N_3 = floor( (sup(R) - delta_4 - 1) / delta_3 );
z = zeros(N_3+1,1);
z(1) = sup(R) - delta_4;
for i = 2 : N_3+1
  z(i) = z(i-1) - delta_3;
endfor
if ( z(N_3 + 1) < 1 ) 
  printf("Attention: z(N_3 + 1) < 1 at row:\n") 
  n
  endif 
w = infsup(zeros(N_3+1,1));
w = exp( z.^2 / 2 - R^2 / 2) * R .* ((1./z).^2) .* rsqrt( 1 - exp( z.^2 - R^2 ) * R^2 .* ((1./z).^2) );
J_3(n) = delta_3 * (w(1) + w(N_3 + 1)) / 2 + delta_3 * sum( w(2 : N_3) );
%%%%%%%%%%%%%%%%%%%%%%%%%%%%%%%%%%%%%%
%%% J_2 
if ( z(N_3 + 1) - x(N_1 + 1) >= delta_1 + delta_3 )
  printf("Attention: z(N_3 + 1) - x(N_1 + 1) should not be so large, at row:\n")
  n
endif
J_2(n) = ( y(N_1 + 1) + w(N_3 + 1) ) * ( z(N_3 + 1) - x(N_1 + 1) ) / 2;
%%%%%%%%%%%%%%%%%%%%%%%%%%%%%%%%%%%%%%
endfor
J = J_0 + J_1 + J_2 + J_3 + J_4;
[J_max, n_max] = max(sup(J));
elapsed_time = toc ();

M
printf("J_0,J_1,J_2,J_3,J_4 at max:\n")
[J_0(n_max); J_1(n_max); J_2(n_max); J_3(n_max); J_4(n_max)]

printf("R, d(R) at max:\n")
[vec_R(n_max); vec_dR(n_max)]

printf("Time in seconds:\n")
tt = elapsed_time;
tt % tt/60

printf("Min and max of d(R):\n")
min(inf(vec_dR))
max(sup(vec_dR))

printf("Min and max of m_0:\n")
min(m_0)
max(m_0)

printf("J max:\n")
J(n_max)
\end{verbatim}
\end{tiny}

\bibliographystyle{amsplain}
\bibliography{networkbib}

\providecommand{\bysame}{\leavevmode\hbox to3em{\hrulefill}\thinspace}
\providecommand{\MR}{\relax\ifhmode\unskip\space\fi MR }
\providecommand{\MRhref}[2]{%
  \href{http://www.ams.org/mathscinet-getitem?mr=#1}{#2}
}
\providecommand{\href}[2]{#2}
\begin{thebibliography}{10}

\bibitem{ablang1}
U.~Abresch and J.~Langer, \emph{The normalized curve shortening flow and
  homothetic solutions}, J. Diff. Geom. \textbf{23} (1986), no.~2, 175--196.

\bibitem{BeNo}
G.~Bellettini and M.~Novaga, \emph{Curvature evolution of nonconvex
  lens--shaped domains}, J. Reine Angew. Math. \textbf{656} (2011), 17--46.

\bibitem{brakke}
K.~A. Brakke, \emph{The motion of a surface by its mean curvature}, Princeton
  University Press, NJ, 1978.

\bibitem{chenguo}
X.~Chen and J.-S. Guo, \emph{Self--similar solutions of a 2--{D}
  multiple--phase curvature flow}, Phys. D \textbf{229} (2007), no.~1, 22--34.

\bibitem{epswei}
C.~L. Epstein and M.~I. Weinstein, \emph{A stable manifold theorem for the
  curve shortening equation}, Comm. Pure Appl. Math. \textbf{40} (1987), no.~1,
  119--139.

\bibitem{FefSec96}
C.~L. Fefferman and L.~A. Seco, \emph{Interval arithmetic in quantum
  mechanics}, Applications of interval computations ({E}l {P}aso, {TX}, 1995),
  Appl. Optim., vol.~3, Kluwer Acad. Publ., Dordrecht, 1996, pp.~145--167.

\bibitem{Fousse2007}
L.~Fousse, G.~Hanrot, V.~Lef{\`e}vre, P.~P{\'e}lissier, and P.~Zimmermann,
  \emph{M{PFR}: a multiple--precision binary floating--point library with
  correct rounding}, ACM Trans. Math. Software \textbf{33} (2007), no.~2, Art.
  13, 15.

\bibitem{haettenschweiler}
J.~H{\"a}ttenschweiler, \emph{Mean curvature flow of networks with triple
  junctions in the plane}, Master's thesis, ETH Z\"urich, 2007.

\bibitem{conforming}
O.~Heimlich, \emph{{GNU} {O}ctave {I}nterval {P}ackage {M}anual -- conformance
  claim to the ieee standard ieee},
  http:/$\!\!$/$\!$octave.sourceforge.net/interval/package\_doc/IEEE-Std-1788\_002d2015.html.

\bibitem{huisk3}
G.~Huisken, \emph{Asymptotic behavior for singularities of the mean curvature
  flow}, J. Diff. Geom. \textbf{31} (1990), 285--299.

\bibitem{IEEE1788}
IEEE, \emph{{IEEE} {S}tandard for {I}nterval {A}rithmetic, {IEEE 1788--2015}},
  http:/$\!\!$/$\!$standards.ieee.org/findstds/standard/1788-2015.html.

\bibitem{ilman1}
T.~Ilmanen, \emph{Elliptic regularization and partial regularity for motion by
  mean curvature}, Mem. Amer. Math. Soc., vol. 108(520), Amer. Math. Soc.,
  1994.

\bibitem{ilman3}
\bysame, \emph{Singularities of mean curvature flow of surfaces},
  http:/$\!\!$/$\!$www.math.ethz.ch/{$\sim$}ilmanen/papers/sing.ps, 1995.

\bibitem{Ilnevsch}
T.~Ilmanen, A.~Neves, and F.~Schulze, \emph{On short time existence for the
  planar network flow}, ArXiv Preprint Server -- http:/$\!\!$/$\!$arxiv.org,
  2014.

\bibitem{MMN13}
A.~Magni, C.~Mantegazza, and M.~Novaga, \emph{Motion by curvature of planar
  networks {II}}, ArXiv Preprint Server -- http:/$\!\!$/$\!$arxiv.org, to
  appear on Ann. Sc. Norm. Sup. Pisa, 2013.

\bibitem{mannovplu}
C.~Mantegazza, M.~Novaga, and A.~Pluda, \emph{Motion by curvature of networks
  with two triple junctions}, in preparation.

\bibitem{mannovplusch}
C.~Mantegazza, M.~Novaga, A.~Pluda, and F.~Schulze, \emph{Evolution of networks
  with multiple junctions}, in preparation.

\bibitem{mannovtor}
C.~Mantegazza, M.~Novaga, and V.~M. Tortorelli, \emph{Motion by curvature of
  planar networks}, Ann. Sc. Norm. Sup. Pisa \textbf{3 (5)} (2004), 235--324.

\bibitem{mazsae}
R.~Mazzeo and M.~S{\'a}ez, \emph{Self--similar expanding solutions for the
  planar network flow}, Analytic aspects of problems in {R}iemannian geometry:
  elliptic {PDE}s, solitons and computer imaging, S\'emin. Congr., vol.~22,
  Soc. Math. France, Paris, 2011, pp.~159--173.

\bibitem{Moore91}
R.~E. Moore, \emph{Interval tools for computer aided proofs in analysis},
  Computer aided proofs in analysis ({C}incinnati, {OH}, 1989), IMA Vol. Math.
  Appl., vol.~28, Springer, New York, 1991, pp.~211--216.

\bibitem{pluda}
A.~Pluda, \emph{Evolution of spoon--shaped networks}, ArXiv Preprint Server --
  http:/$\!\!$/$\!$arxiv.org, to appear on Netw. Heterog. Media, 2015.

\bibitem{schnurerlens}
O.~C. Schn{\"u}rer, A.~Azouani, M.~Georgi, J.~Hell, J.~Nihar, A.~Koeller,
  T.~Marxen, S.~Ritthaler, M.~S{\'a}ez, F.~Schulze, and B.~Smith,
  \emph{Evolution of convex lens--shaped networks under the curve shortening
  flow}, Trans. Amer. Math. Soc. \textbf{363} (2011), no.~5, 2265--2294.

\bibitem{schn-schu}
O.~C. Schn\"urer and F.~Schulze, \emph{Self--similarly expanding networks to
  curve shortening flow}, Ann. Sc. Norm. Super. Pisa \textbf{6} (2007), no.~4,
  511--528.

\end{thebibliography}

\end{document}